\newtheorem{Lem}{Lemma}[section]
\newtheorem{Prop}[Lem]{Proposition}
\newtheorem{Cor}[Lem]{Corollary}
\newtheorem{Thm}[Lem]{Theorem}
\theoremstyle{definition}
\newtheorem{Rem}[Lem]{Remark}
\newtheorem{Qu}[Lem]{Question}
\newcommand\pf{\begin{proof}}
\newcommand\epf{\end{proof}}
\newcommand\Pal{\operatorname{Pal}}
\newcommand\Aut{\operatorname{Aut}}
\newcommand\Hom{\operatorname{Hom}}
\newcommand\Ker{\operatorname{Ker}}
\newcommand\id{\operatorname{id}}
\newcommand\GL{\operatorname{GL}}
\newcommand\SL{\operatorname{SL}}
\newcommand\PP{\operatorname{P}}
\newcommand\RR{\mathcal R}
\newcommand\ZZ{\mathbb Z}
\numberwithin{equation}{section}
\title{A palindromization map for the free group}
\author{Christian Kassel}
\address{Christian Kassel: 
Universit\'{e} de Strasbourg,
Institut de Recherche Math\'{e}\-ma\-tique Avanc\'{e}e,
CNRS - Universit\'{e} Louis Pasteur,
7 rue Ren\'{e} Descartes, 67084 Strasbourg, France}
\email{kassel@math.u-strasbg.fr}
\author{Christophe Reutenauer}
\address{Christophe Reutenauer:
Math\'ematiques, Universit\'e du Qu\'ebec \`a Montr\'eal,
Montr\'eal, CP 8888, succ.\ Centre Ville, Canada H3C 3P8}
\email{reutenauer.christophe@uqam.ca}
\begin{document}

\maketitle

\vskip 30pt
\noindent
{\sc Abstract}. 
\emph{ 
We define a self-map $\Pal: F_2 \to F_2$ of the free group
on two generators $a, b$,
using automorphisms of~$F_2$ that form a group isomorphic to
the braid group~$B_3$.
The map~$\Pal$ restricts to de Luca's right iterated palindromic closure
on the submonoid generated by $a, b$. We show that~$\Pal$
is continuous for the profinite topology on~$F_2$;
it is the unique continuous extension of 
de Luca's right iterated palindromic closure to~$F_2$.
The values of~$\Pal$ are palindromes and coincide with
the elements $g \in F_2$ such that $abg$ is conjugate to $bag$.
}

\bigskip
\noindent
{\sc Key Words:}
word, palindrome, free group, automorphism, braid group,
profinite topology

\bigskip
\noindent
{\sc Mathematics Subject Classification (2000):}
20E05, 20F10, 20F36, 20M05, 68R15

\bigskip\bigskip

\hspace{3cm}

\section*{Introduction}

To any word~$w$ on an alphabet consisting of two letters $a$ and~$b$, 
de Luca~\cite{Lu} associated a palindromic word~$\PP(w)$, 
called its \emph{right iterated palindromic closure}.
The element $\PP(w) \in \{a,b\}^*$ is defined recursively 
by $\PP(1) \nolinebreak = \nolinebreak 1$ and 
$$\PP(wx) = \bigl(\PP(w)x \bigr)^+$$
for all $w \in \{a,b\}^*$ and~$x\in \{a,b\}$;
here $w^+$ is the unique shortest palindrome having $w$ as a prefix.
De~Luca showed that all words $\PP(w)$ are \emph{central} in the sense of~\cite{Lo},
and that any central word is of this form.
Moreover, de Luca's map $\PP$ is injective, i.e.,
$\PP(u) = \PP(v)$ implies $u = v$. 

In this paper we construct a self-map
$$\Pal : F_2 \to F_2$$
of the free group $F_2$ on~$a,b$,
whose restriction to the monoid~$\{a,b\}^*$ is de~Luca's map
$\PP : \{a,b\}^* \to \{a,b\}^*$,
i.e., we have $\Pal(w) = \PP(w)$ for all $w\in \{a,b\}^*$.
The map~$\Pal$, which we call the \emph{palindromization map},
is defined using certain automorphisms of~$F_2$.
These automorphisms form a group that is isomorphic to
the \emph{braid group}~$B_3$ of braids on three strands.
One of the most interesting properties of our map~$\Pal$ 
is that it is continuous for the \emph{profinite topology} on~$F_2$;
we actually prove that $\Pal$ is the unique continuous extension of~$\PP$
to~$F_2$.
As is the case with the map~$\PP$, each image $\Pal(w)$
of the palindromization map is a palindrome, i.e., is fixed by the unique
anti-automorphism of~$F_2$ fixing $a$ and~$b$.
We also characterize the elements $g \in F_2$ belonging to the image of~$\Pal$
as those for which $abg$ and $bag$ are conjugate elements.
Contrary to~$\PP$, our map~$\Pal$ is not injective; 
we determine all pairs $(u,v)$ such that $\Pal(u) = \Pal(v)$;
the result is expressed in terms of the braid group~$B_3$.

To shed light on the theory of Sturmian words and morphisms, 
it is convenient to put it into the context of the free group.
In this paper we illustrate this idea on de Luca's right iterated palindromic closure.
It was precisely by connecting the combinatorics of words and 
group theory that we were able to find our extension~$\Pal$.
Indeed, Justin~\cite{Ju} proved that de Luca's map~$\PP$ 
satisfies a certain functional equation.
We interpret this as expressing that~$\PP$ is a \emph{cocycle} in the sense
of Serre's non-abelian cohomology. 
In this language our main observation is that~$\PP$ is a \emph{trivial} cocycle. 
This has two consequences, of which the second one is quite fortunate: 
firstly, to express the triviality of the cocycle~$\PP$ 
we are forced to work in the free group~$F_2$;
secondly, expressing~$\PP$ as a trivial cocycle yields \textit{ipso facto} 
a formula for~$\Pal$.

Let us detail the contents of the paper.
In Section~\ref{automorphisms}
we associate an automorphism~$R_w$ of~$F_2$ to each $w\in F_2$;
the automorphisms~$R_w$ are exactly the automorphisms of~$F_2$ fixing~$aba^{-1}b^{-1}$.
In Section~\ref{braid} we show that the automorphisms~$R_w$
form a subgroup that is isomorphic to the braid group~$B_3$.
In Section~\ref{def-pal} we define the palindromization map $\Pal : F_2 \to F_2$
and we give it a cohomological interpretation.
In Section~\ref{pal-pal} we show that each element $\Pal(w)$ is a palindrome in~$F_2$
and that our map~$\Pal$ extends de Luca's right iterated palindromic closure;
we also compute the image of~$\Pal(w)$ in the free abelian group~$\ZZ^2$.
As mentioned above, the map~$\Pal$ is not injective; in Section~\ref{triv-pal}
we determine all pairs $(u,v)$ such that $\Pal(u) = \Pal(v)$.
In Section~\ref{profinite} we establish that $\Pal$ is continuous
for the profinite topology on~$F_2$.
We characterize the elements of~$F_2$
belonging to the image of~$\Pal$ in Section~\ref{sect-image}.
The paper concludes with a short appendix
collecting the basic facts on non-abelian cohomology
needed in Section~\ref{def-pal}.

\section{The automorphisms~$R_w$}\label{automorphisms}

Let $F_2$ be the free group generated by $a$ and~$b$.
Consider the automorphisms $R_a$, $R_b$ of~$F_2$ defined by
\begin{equation}\label{def-R}
R_a   = 
\begin{cases}
a \mapsto a \, , \\
b \mapsto b a \, ,
\end{cases} 
\quad\text{and}\quad
R_b =
\begin{cases}
a \mapsto ab \, , \\
b \mapsto b \, .
\end{cases}
\end{equation}
Their inverses are given by
\begin{equation*}
(R_a)^{-1}   = 
\begin{cases}
a \mapsto a \, , \\
b \mapsto ba^{-1} \, ,
\end{cases} 
\quad\text{and}\quad
(R_b)^{-1} =
\begin{cases}
a \mapsto ab^{-1} \, , \\
b \mapsto b \, .
\end{cases}
\end{equation*}
The automorphisms $R_a$ and $R_b$ are respectively denoted by $\widetilde{G}$, $\widetilde{D}$
in~\cite[Sect.\ 2.2.2]{Lo} (see also~\cite[Eqn.~(2.1)]{KR}).
They are related by
\begin{equation}\label{E}
E \, R_a = R_b  \, E \, ,
\end{equation}
where $E$ is the involution of~$F_2$ exchanging $a$ and~$b$.
(The automorphisms $R_a, R_b$ are instances of what Godelle calls transvections in~\cite{Go}.)

Let $w \mapsto R_w$ be the group homomorphism $F_2 \to \Aut(F_2)$ sending $a$ to~$R_a$
and $b$ to~$R_b$. 
In particular, if $w=1$ is the neutral element of~$F_2$, then $R_w = \id$ (the identity of~$F_2$).
Moreover, $R_{a^{-1}} = (R_a)^{-1}$ and $R_{b^{-1}} = (R_b)^{-1}$. 
It follows from~\eqref{E} that for all $w\in F_2$,
\begin{equation}\label{Ebis}
E \, R_w = R_{E(w)} \,  E \, .
\end{equation}

\begin{Lem}\label{commutator}
Each automorphism $R_w$ fixes the commutator $aba^{-1}b^{-1}$.
\end{Lem}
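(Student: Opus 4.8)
The plan is to exploit the fact that $w \mapsto R_w$ is a group homomorphism $F_2 \to \Aut(F_2)$, so its image is precisely the subgroup of $\Aut(F_2)$ generated by $R_a$ and~$R_b$. Since the automorphisms of~$F_2$ fixing a given element form a subgroup of~$\Aut(F_2)$, it suffices to verify that each of the two generators $R_a$ and~$R_b$ fixes the commutator $c = aba^{-1}b^{-1}$. The general assertion $R_w(c) = c$ then follows at once, because any $R_w$ is a product of factors $R_a^{\pm 1}$ and~$R_b^{\pm 1}$, all of which lie in the stabilizer of~$c$.

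First I would check $R_a$ by direct substitution. Using $R_a(a) = a$ and $R_a(b) = ba$, and hence $R_a(b^{-1}) = (ba)^{-1} = a^{-1}b^{-1}$, I would compute
$$
R_a(aba^{-1}b^{-1}) = a\,(ba)\,a^{-1}\,(a^{-1}b^{-1}) = ab\,(aa^{-1}a^{-1})\,b^{-1} = aba^{-1}b^{-1},
$$
the interior letters telescoping to give~$c$ again.

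For the second generator I would avoid a repeat computation by invoking the symmetry relation~\eqref{E}. Observe that the exchange involution~$E$ sends $c = aba^{-1}b^{-1}$ to $bab^{-1}a^{-1} = c^{-1}$. Applying~\eqref{E} in the form $E R_a = R_b E$ to~$c$ then gives
$$
R_b(c^{-1}) = R_b\bigl(E(c)\bigr) = E\bigl(R_a(c)\bigr) = E(c) = c^{-1},
$$
so that $R_b(c) = c$ as well. (Alternatively one could simply rerun the direct substitution, now with $R_b(a) = ab$ and $R_b(b) = b$.) This completes the reduction, and since both generators lie in the stabilizer of~$c$, so does every~$R_w$. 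I do not expect any genuine obstacle here; the only point requiring care is the bookkeeping of inverses, since $R_a$ and~$R_b$ do not act letter by letter and one must use $R_a(b^{-1}) = a^{-1}b^{-1}$ and $R_b(a^{-1}) = b^{-1}a^{-1}$ rather than inverting the images naively.
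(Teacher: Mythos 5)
Your proposal is correct and follows the same route as the paper: since the stabilizer of $aba^{-1}b^{-1}$ is a subgroup of $\Aut(F_2)$ and the $R_w$ form the subgroup generated by $R_a$ and $R_b$, it suffices to check the two generators, which is exactly what the paper's (one-line) proof asserts. Your explicit verification of $R_a$ and the use of the symmetry $E\,R_a = R_b\,E$ for $R_b$ merely fill in the computation the paper leaves to the reader.
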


\pf
It is enough to verify that both $R_a$ and $R_b$ fix~$aba^{-1}b^{-1}$.
\epf

When $w = (ab^{-1}a)^i$ for $i= 1, 2, 4$, then an easy computation shows 
that $R_w$ is given by
\begin{equation}\label{R-center1}
R_{ab^{-1}a}  = 
\begin{cases}
a \mapsto b^{-1} \, , \\
b \mapsto b a b^{-1} \, ,
\end{cases}
\end{equation}
\begin{equation}\label{R-center2}
R_{(ab^{-1}a)^2}  = 
\begin{cases}
a \mapsto ba^{-1}b^{-1} \, , \\
b \mapsto b a b^{-1}a^{-1}b^{-1} \, ,
\end{cases}
\end{equation}
\begin{equation}\label{R-center3}
R_{(ab^{-1}a)^4}  = 
\begin{cases}
a \mapsto (b a b^{-1}) \, a \, (ba^{-1}b^{-1}) \, , \\
b \mapsto (b a b^{-1}a^{-1}) \, b \, (a ba^{-1}b^{-1})  \, .
\end{cases}
\end{equation}

We can rephrase \eqref{R-center1}--\eqref{R-center3} as follows.
Let $\tau$ be the automorphism of~$F_2$ sending $a$ to~$b^{-1}$ and~$b$ to~$a$;
we have $\tau^2(a) = a^{-1}$ and $\tau^2(b) = b^{-1}$, so that $\tau^4 = \id$. 
Then for all $u\in F_2$,
\begin{equation}\label{R-center10}
R_{ab^{-1}a}(u)  = b \, \tau(u) \, b^{-1} \, ,
\end{equation}
\begin{equation}\label{R-center20}
R_{(ab^{-1}a)^2}(u)  = ba \, \tau^2(u) \, (ba)^{-1} \, ,
\end{equation}
\begin{equation}\label{R-center30}
R_{(ab^{-1}a)^4}(u)  = 
(b a b^{-1}a^{-1}) \, u \, (b a b^{-1}a^{-1})^{-1}  \, .
\end{equation}
In other words, 
$R_{(ab^{-1}a)^4}$ is  the conjugation by the commutator~$b a b^{-1}a^{-1}$.

We next consider the abelianizations of the automorphisms~$R_w$.
Let $\pi : F_2 \to \ZZ^2$ be the canonical surjection
sending the generators $a$ and~$b$ of~$F_2$
to the respective column-vectors 
$$\begin{pmatrix}
1 \\
0
\end{pmatrix} 
\quad
\text{and}
\quad
\begin{pmatrix}
0 \\
1
\end{pmatrix} 
\in \ZZ^2 \, .
$$ 
In the sequel we identify $\Aut(\ZZ^2)$ with the matrix group~$\GL_2(\ZZ)$.

For any $w\in F_2$, let $M_w$ 
be the abelianization of the automorphism~$R_w$, 
i.e., the unique automorphism~$M_w$ of~$\ZZ^2$ such that
$\pi \circ R_w = M_w \circ \pi$.
Since $w \mapsto R_w$ is a group homomorphism, so is the map $w \mapsto M_w$.
The latter is determined by its values $M_a$ and~$M_b$.
It follows from~\eqref{def-R} that $M_a$ and~$M_b$ can be identified with the matrices
\begin{equation*}
M_a = 
\begin{pmatrix}
1 &  1 \\
0 & 1
\end{pmatrix}
\quad
\text{and}
\quad
M_b = 
\begin{pmatrix}
1 &  0 \\
1 & 1
\end{pmatrix} 
\, .
\end{equation*}
Since $M_a$ and $M_b$ are of determinant one, 
we conclude that $M_w \in \SL_2(\ZZ)$ for 
all $w\in F_2$.
Formulas~\eqref{R-center1}--\eqref{R-center3} imply that
\begin{equation}\label{M-relations}
M_{a b^{-1}a}  = 
\begin{pmatrix}
0 &  1 \\
-1 & 0
\end{pmatrix}  \, , \quad
M_{(a b^{-1}a)^2}  = -I_2\, ,
\quad
M_{(a b^{-1}a)^4} = I_2\, ,
\end{equation}
where $I_2$ denotes the unit $2 \times 2$ matrix.

\section{The braid group $B_3$}\label{braid}

Let $\RR$ be the subgroup of~$\Aut(F_2)$ consisting of all automorphisms~$R_w$,
where $w\in F_2$. 
By definition, the subgroup $\RR$~is generated by $R_a$ and~$R_b$.
It follows from Lemma~\ref{commutator}
that every element of~$\RR$ fixes the commutator $aba^{-1}b^{-1}$.
The converse also holds.

\begin{Prop}\label{char-commutator}
The group $\RR$ is the subgroup of~$\Aut(F_2)$ of all automorphisms 
fixing~$aba^{-1}b^{-1}$.
\end{Prop}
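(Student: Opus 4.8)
The plan is to show that any automorphism $\varphi \in \Aut(F_2)$ fixing the commutator $c = aba^{-1}b^{-1}$ lies in $\RR$. We already know from Lemma~\ref{commutator} that $\RR$ sits inside this stabilizer, so the content is the reverse inclusion. The natural tool is to pass to the abelianization, where the problem becomes tractable, and then lift the result back to $F_2$. First I would recall the classical fact that $\Aut(F_2)$ acts on the conjugacy class of $c$, and that an automorphism fixes $c$ on the nose (not just up to conjugacy) precisely when its image in $\GL_2(\ZZ)$ lies in $\SL_2(\ZZ)$; more precisely, a theorem going back to Nielsen says that for $\varphi \in \Aut(F_2)$ one has $\varphi(c) = w\,c^{\pm 1}\,w^{-1}$ for some $w$, with the sign determined by the determinant of the abelianization $\overline{\varphi} \in \GL_2(\ZZ)$. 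So the hypothesis $\varphi(c) = c$ forces $\det \overline{\varphi} = 1$, i.e.\ $\overline{\varphi} \in \SL_2(\ZZ)$.

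Next I would exploit the structure of $\SL_2(\ZZ)$ together with the abelianization map $w \mapsto M_w$ introduced at the end of Section~\ref{automorphisms}. Since $M_a$ and $M_b$ are the standard elementary generators
\[
\begin{pmatrix} 1 & 1 \\ 0 & 1 \end{pmatrix}
\quad\text{and}\quad
\begin{pmatrix} 1 & 0 \\ 1 & 1 \end{pmatrix}
\]
of $\SL_2(\ZZ)$, the homomorphism $w \mapsto M_w$ maps $F_2$ onto all of $\SL_2(\ZZ)$. Hence, given $\varphi$ with $\overline{\varphi} \in \SL_2(\ZZ)$, I can choose $w \in F_2$ with $M_w = \overline{\varphi}$, so that $\psi = R_w^{-1} \circ \varphi$ is an automorphism fixing $c$ whose abelianization $\overline{\psi}$ is the identity. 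It then suffices to prove that such a $\psi$ is itself an element of $\RR$; in fact I expect to show $\psi$ is the identity, or at worst a power of the central inner automorphism $R_{(ab^{-1}a)^4}$, which by~\eqref{R-center30} is conjugation by the commutator $bab^{-1}a^{-1}$ and certainly lies in $\RR$.

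The crux is therefore to analyze automorphisms $\psi$ of $F_2$ that fix $c$ and act trivially on the abelianization. Acting trivially on $\ZZ^2$ means $\psi(a) = a\,[\text{commutator word}]$ and $\psi(b) = b\,[\text{commutator word}]$, i.e.\ $\psi(a) = a c_1$, $\psi(b) = b c_2$ with $c_1, c_2 \in [F_2, F_2]$. An automorphism acting trivially on the abelianization is an \emph{$IA$-automorphism}; for $F_2$ the group of such automorphisms is known to consist exactly of the inner automorphisms (this is special to rank two, where $\Aut(F_2)/\mathrm{Inn}(F_2) \cong \GL_2(\ZZ)$ and the kernel of the map to $\GL_2(\ZZ)$ is precisely $\mathrm{Inn}(F_2)$). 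Thus $\psi$ is conjugation by some $g \in F_2$. Imposing that $\psi$ fixes $c = aba^{-1}b^{-1}$ forces $g$ to commute with $c$; since the centralizer of $c$ in $F_2$ is the cyclic group generated by $c$ itself (as $c$ is not a proper power and $F_2$ is free, nontrivial elements have cyclic centralizers), I conclude $g = c^k$ for some $k \in \ZZ$. By~\eqref{R-center30}, conjugation by $c = bab^{-1}a^{-1}$'s relevant generator is realized inside $\RR$ as $R_{(ab^{-1}a)^{4}}$, so every power $g = c^k$ gives $\psi \in \RR$, whence $\varphi = R_w \circ \psi \in \RR$.

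The main obstacle I anticipate is pinning down the two structural facts cleanly in the free-group setting: (i) that fixing $c$ up to the sign ambiguity is controlled by $\det$ of the abelianization (Nielsen's theorem on the automorphism of $F_2$ and its action on $c$), and (ii) that the $IA$-automorphisms of $F_2$ are exactly the inner ones. Both are standard but rank-specific, so I would either cite them carefully or, if a self-contained argument is wanted, verify directly by a normal-form or length argument that an automorphism fixing $c$ and trivial on $\ZZ^2$ must be inner. Once those two inputs are in hand, the surjectivity of $w \mapsto M_w$ onto $\SL_2(\ZZ)$ and the explicit formula~\eqref{R-center30} reduce everything to the centralizer computation, which is routine.
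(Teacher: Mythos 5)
Your proposal is correct, but it takes a genuinely different route from the paper's proof. The paper conjugates the problem by the mirror anti-automorphism $\omega$: if $\varphi$ fixes $aba^{-1}b^{-1}$, then $\omega\circ\varphi\circ\omega$ fixes $b^{-1}a^{-1}ba$, and the paper then cites the theorem of Cohn and Helling that the stabilizer of $b^{-1}a^{-1}ba$ in $\Aut(F_2)$ is generated by $L_a$ and $L_b$; since $R_a=\omega\circ L_a\circ\omega$ and $R_b=\omega\circ L_b\circ\omega$, this yields $\varphi\in\RR$ in a few lines. You instead work with the exact sequence $1\to\mathrm{Inn}(F_2)\to\Aut(F_2)\to\GL_2(\ZZ)\to 1$: Nielsen's theorem on the image of the commutator forces $\det\overline{\varphi}=1$; surjectivity of $w\mapsto M_w$ onto $\SL_2(\ZZ)$ (elementary matrices generate) lets you split off a factor $R_w$; Nielsen's rank-two theorem that IA-automorphisms are inner, together with the fact that the centralizer of $c=aba^{-1}b^{-1}$ in $F_2$ is $\langle c\rangle$, identifies the remaining factor as conjugation by a power of~$c$, which lies in $\RR$ by~\eqref{R-center30}. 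Both proofs outsource the core difficulty to a classical citation, but the citations differ in character: the paper's (Cohn/Helling) is essentially the statement itself for the mirrored generators, while yours (the two Nielsen theorems) is more foundational, and your decomposition $\varphi=R_w\circ(\text{conjugation by }c^k)$ buys extra structural information --- it exhibits the stabilizer of $c$ as an extension of $\SL_2(\ZZ)$ by the inner automorphisms coming from $\langle c\rangle$, which is exactly the structure the paper exploits later in Proposition~\ref{Kernel} via the map $B_3\to\SL_2(\ZZ)$. Two minor slips, neither fatal: your claim that fixing $c$ holds \emph{precisely when} the abelianized determinant is $1$ is an overstatement (only the forward implication is true, and only it is needed; note it tacitly uses that $c$ is not conjugate to $c^{-1}$, a routine cyclic-word check you should make explicit); and the conjugating element $bab^{-1}a^{-1}$ in~\eqref{R-center30} is $c^{-1}$ rather than $c$, which is harmless since $\RR$ is a group and so contains all powers of that conjugation.
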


\pf
Let $\varphi$ be an automorphism fixing~$aba^{-1}b^{-1}$.
Consider the anti-automorphism $\omega$ of~$F_2$ 
such that $\omega(a) = a$ and~$\omega(b) = b$. 
For any word $w \in F_2$, the word $\omega(w)$ is its mirror image.
In particular, $\omega(aba^{-1}b^{-1}) = b^{-1}a^{-1}ba$.
We have
$$(\omega \circ \varphi \circ \omega)(b^{-1}a^{-1}ba) 
= \omega \bigl( \varphi(aba^{-1}b^{-1}) \bigr) 
= \omega(aba^{-1}b^{-1}) = b^{-1}a^{-1}ba\, .$$
Now it is well known (see~\cite[Sect.~3]{Co} or~\cite{He}) that 
the subgroup of~$\Aut(F_2)$ fixing $b^{-1}a^{-1}ba$
is generated by the two automorphisms
\begin{equation}\label{values-L}
L_a   = 
\begin{cases}
a \mapsto a \, , \\
b \mapsto ab \, ,
\end{cases} 
\quad\text{and}\quad
L_b =
\begin{cases}
a \mapsto ba \, , \\
b \mapsto b \, .
\end{cases}
\end{equation}
(The automorphisms $L_a$ and $L_b$ coincide respectively with the automorphisms 
$G$ and $D$ of~\cite[Sect.~2.2.2]{Lo}; see also~\cite[Eqn.~(2.1)]{KR}.)
Therefore, $\varphi$~belongs to the subgroup of~$\Aut(F_2)$ generated by
$\omega \circ L_a \circ \omega$ and $\omega \circ L_b \circ \omega$.
A simple computation based on~\eqref{def-R} and \eqref{values-L} shows that
\begin{equation}\label{def-L}
R_a = \omega \circ L_a \circ \omega
\quad\text{and}\quad 
R_b = \omega \circ L_b \circ \omega \, .
\end{equation} 
This proves that $\varphi$ belongs to~$\RR$.
\epf

We next claim that
the group~$\RR$ is isomorphic to the braid group~$B_3$ 
of braids on three strands. 
Recall that the group~$B_3$ has the following presentation (see~\cite[Sect.~1.1]{KT}):
\begin{equation}\label{presentationB3}
B_3 = \langle \, \sigma_1, \sigma_2 \; |\;  
\sigma_1 \sigma_2 \sigma_1 = \sigma_2 \sigma_1 \sigma_2 \, \rangle \, .
\end{equation}

\begin{Prop}\label{B3}
There is a group homomorphism
$i : B_3 \to \Aut(F_2)$ such that
$i(\sigma_1) = R_a$ and $i(\sigma_2) = R_{b}^{-1}$.
This homomorphism is injective and its image is~$\RR$.
\end{Prop}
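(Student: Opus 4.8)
The plan is to establish the three assertions in turn: first that $i$ is a well-defined homomorphism, then that its image is $\RR$, and finally that it is injective. For the first, I need only check that the defining relation of~$B_3$ from~\eqref{presentationB3} is respected, i.e.\ that $R_a R_b^{-1} R_a = R_b^{-1} R_a R_b^{-1}$ in $\Aut(F_2)$. Since $\sigma_1 \mapsto R_a$ and $\sigma_2 \mapsto R_b^{-1}$ generate, this is a direct computation evaluating both composite automorphisms on the generators $a$ and~$b$ using~\eqref{def-R}; by the universal property of the presentation, a homomorphism $i$ then exists. The image assertion is immediate: the image of~$i$ is the subgroup generated by $R_a$ and $R_b^{-1}$, which equals the subgroup generated by $R_a$ and $R_b$, namely~$\RR$ by its definition in Section~\ref{braid}.

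The substantial part is injectivity, and this is where I would spend the most care. The cleanest route is to exploit the abelianization $w \mapsto M_w$ introduced at the end of Section~\ref{automorphisms}, which gives matrices $M_a, M_b \in \SL_2(\ZZ)$. Composing $i$ with the abelianization map $\RR \to \SL_2(\ZZ)$ sends $\sigma_1 \mapsto M_a$ and $\sigma_2 \mapsto M_b^{-1}$, and this composite is the classical surjection $B_3 \to \SL_2(\ZZ)$ (up to the standard normalization of generators). It is well known that this map has kernel equal to the center $Z(B_3)$, which is infinite cyclic generated by the full-twist $(\sigma_1 \sigma_2)^3$. Hence $\Ker(i)$ is contained in $Z(B_3)$, and to prove $i$ injective it suffices to show that $i$ does not kill the central generator: I would verify that $i\bigl((\sigma_1\sigma_2)^3\bigr)$ is a nontrivial automorphism of~$F_2$. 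Concretely, $(\sigma_1\sigma_2)^3$ maps under $i$ to $(R_a R_b^{-1})^3$, and formulas~\eqref{M-relations} together with the explicit conjugation description~\eqref{R-center30} should identify this image with a nontrivial inner automorphism — conjugation by a commutator — which is visibly not the identity of~$F_2$.

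The main obstacle I anticipate is matching conventions precisely so that the composite $B_3 \to \SL_2(\ZZ)$ really is the standard representation whose kernel is known to be the center. There is some bookkeeping in tracking the generator~$R_b^{-1}$ (rather than $R_b$) through to $M_b^{-1}$, and in pinning down exactly which power of $\sigma_1\sigma_2$ is central; an off-by-convention error here would derail the kernel computation. An alternative, more self-contained route that sidesteps citing the kernel of $B_3 \to \SL_2(\ZZ)$ would be to argue directly in $\Aut(F_2)$: by Proposition~\ref{char-commutator} the image $\RR$ is the full stabilizer of $aba^{-1}b^{-1}$, and one can appeal to the known structure of this stabilizer (via the references \cite{Co}, \cite{He} already invoked) to see that it is isomorphic to $B_3$ with $R_a, R_b^{-1}$ as standard braid generators, forcing $i$ to be an isomorphism. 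I would lead with the abelianization argument since it is the most transparent, and fall back on the direct structural description only if the convention-tracking proves awkward.
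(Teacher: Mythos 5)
The existence and image parts of your proposal are fine and agree with the paper (which cites \cite[Lemma~2.1]{KR} for the braid relation rather than recomputing it). Your injectivity strategy --- compose $i$ with the abelianization $R_w \mapsto M_w$ and use the classical representation $B_3 \to \SL_2(\ZZ)$ --- is genuinely different from the paper's, which instead embeds $B_3$ into $B_4$ and invokes the homomorphism $f : B_4 \to \Aut(F_2)$ of \cite[Lemma~2.6]{KR}, whose kernel is the center of~$B_4$, to reduce to central elements of~$B_3$. Your route could be made to work (the classical fact you need is exactly what the paper itself invokes later, in the proof of Proposition~\ref{Kernel}), but as written it contains a factual error: the kernel of $\sigma_1 \mapsto M_a$, $\sigma_2 \mapsto M_b^{-1}$ is \emph{not} the center $Z(B_3) = \langle (\sigma_1\sigma_2)^3 \rangle$; it is the infinite cyclic group generated by $(\sigma_1\sigma_2)^6 = (\sigma_1\sigma_2\sigma_1)^4$, of index two in the center (it is $B_3/Z(B_3)$ that is $\mathrm{PSL}_2(\ZZ)$, not $\SL_2(\ZZ)$). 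The paper's own formula \eqref{M-relations} exhibits this: the full twist $(\sigma_1\sigma_2)^3 = (\sigma_1\sigma_2\sigma_1)^2$ has image $M_{(ab^{-1}a)^2} = -I_2 \neq I_2$. Consequently your concluding computation also misfires: $i\bigl((\sigma_1\sigma_2)^3\bigr) = R_{(ab^{-1}a)^2}$ is given by \eqref{R-center20}, not \eqref{R-center30}; it is $u \mapsto ba\,\tau^2(u)\,(ba)^{-1}$, which is not an inner automorphism at all (its abelianization is $-I_2$, whereas inner automorphisms abelianize to the identity). Conjugation by the commutator is $i\bigl((\sigma_1\sigma_2)^6\bigr)$. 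This is precisely the off-by-convention trap you flagged, and as written you fell into it.

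There is also a logical gap independent of the convention issue: once you know $\Ker(i) \subseteq \langle z \rangle$ with $\langle z \rangle$ infinite cyclic, it does \emph{not} suffice to check $i(z) \neq 1$. A homomorphism defined on an infinite cyclic group can have kernel $\langle z^2 \rangle$, say, without killing the generator; injectivity requires $i(z^m) \neq 1$ for every $m \neq 0$. This is exactly where the paper is more careful: after reducing (via $M_{(ab^{-1}a)^{2k}} = (-1)^k I_2$) to elements of the form $(\sigma_1\sigma_2\sigma_1)^{4\ell}$, it observes that $i$ sends such an element to the conjugation by $(bab^{-1}a^{-1})^{\ell}$, and that a conjugation in $F_2$ is the identity only if the conjugating element is trivial --- which, $F_2$ being centerless and torsion-free, forces $\ell = 0$. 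So your proof needs two repairs: replace ``the central generator'' by the generator $(\sigma_1\sigma_2\sigma_1)^4$ of the kernel of $B_3 \to \SL_2(\ZZ)$, and check \emph{all} of its nonzero powers via \eqref{R-center30}. With those corrections your argument closes up and is in fact shorter than the paper's; without them it does not prove injectivity.
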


\pf
The existence of the homomorphism $i : B_3 \to \Aut(F_2)$ 
results from the relation
$$R_a \, R_b^{-1} \, R_a = R_b^{-1} \, R_a \, R_b^{-1} \, ,$$
which was observed in~\cite[Lemma~2.1]{KR}.
The image of~$i$ is clearly the subgroup~$\RR$ of~$\Aut(F_2)$
generated by~$R_a$ and~$R_b$.

In order to prove that $i$ is injective, we use further results of~\cite[Sect.~2]{KR}.
Let $B_4$ be the braid group on four strands; it has a presentation with generators
$\sigma_1$, $\sigma_2$, $\sigma_3$ and relations
$$\sigma_1 \sigma_2 \sigma_1 = \sigma_2 \sigma_1 \sigma_2\, , \quad
\sigma_2 \sigma_3 \sigma_2 = \sigma_3 \sigma_2 \sigma_3\, , \quad
\sigma_1 \sigma_3 = \sigma_3 \sigma_1 \, .$$
First observe that the natural homomorphism $j: B_3 \to B_4$ determined by  
$j(\sigma_1) = \sigma_1$ and $j(\sigma_2) = \sigma_2$ is injective.
Indeed, there is an homomorphism $q: B_4 \to B_3$ such that
$q\circ j = \id$; it is given by
$q(\sigma_1) = q(\sigma_3) = \sigma_1$ and $q(\sigma_2) = \sigma_2$;
see also~\cite[Cor.~1.14]{KT}.

In \cite[Lemma~2.6]{KR} we constructed a 
homomorphism $f: B_4 \to \Aut(F_2)$
whose kernel is the center of~$B_4$ and such that 
$$i(\sigma_1) = R_a = \tau^2 \, f(\sigma_1) \, \tau^{-2}
\quad\text{and}\quad
i(\sigma_2) = R_b^{-1} = \tau^2 \, f(\sigma_2) \, \tau^{-2}\, ,$$
where $\tau^2$ is the square of the automorphism~$\tau$ of~$F_2$ 
introduced in Section~\ref{automorphisms}.
Hence, $i(\beta) = \tau^2 \, f(\beta) \, \tau^{-2}$ for all $\beta \in B_3$.
Pick $\beta \in B_3$ in the kernel of~$i$. 
It follows from the previous relation that $f(\beta) = 1$,
which implies that $\beta$ belongs to the center of~$B_4$.
The group~$B_3$ being embedded in~$B_4$, we conclude that
$\beta$ belongs to the center of~$B_3$. Now the center of~$B_3$ is generated
by~$(\sigma_1 \sigma_2 \sigma_1)^2$. 
Therefore, $\beta = (\sigma_1 \sigma_2 \sigma_1)^{2k}$ for some~$k\in \ZZ$
and hence $i(\beta) = R_{(ab^{-1}a)^{2k}}$.
To complete the proof, it suffices to check that $R_{(ab^{-1}a)^{2k}}$ is the identity
only if~$k=0$. Consider the abelianization $M_{(ab^{-1}a)^{2k}}$ of~$R_{(ab^{-1}a)^{2k}}$.
Since $M_{(ab^{-1}a)^2}= - I_2$ by~\eqref{M-relations}, we have
$M_{(ab^{-1}a)^{2k}} = (-1)^k I_2$. The latter being the identity, $k$ must be even.
Set $k = 2 \ell$ for some integer~$\ell$.
It follows from~\eqref{R-center30}
that $R_{(ab^{-1}a)^{2k}} = R_{((ab^{-1}a)^{4})^{\ell}}$ is the conjugation
by $(b a b^{-1}a^{-1})^{\ell}$ in~$F_2$. 
Such a conjugation is the identity only if~$\ell = 0$;
therefore,~$k=0$, which implies that $\beta= 1$.
\epf 

As a consequence of Proposition~\ref{B3} we obtain the following result,
which had been observed in~\cite[Remark~2.14\,(c)]{KR}.

\begin{Cor}\label{B3bis}
There is a group isomorphism $\RR \cong B_3$.
\end{Cor}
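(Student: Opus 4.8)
The plan is to observe that all the substantive work has already been carried out in Proposition~\ref{B3}, so that the corollary follows by a purely formal argument. Recall that an injective group homomorphism restricts to an isomorphism onto its image: if $f : G \to H$ is an injective homomorphism, then corestricting the codomain to the subgroup $f(G) \leq H$ yields a homomorphism $G \to f(G)$ that is both injective (inherited from $f$) and surjective (by construction), hence an isomorphism.

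Concretely, I would invoke Proposition~\ref{B3} to supply the homomorphism $i : B_3 \to \Aut(F_2)$ together with two facts: first, that $i$ is injective, and second, that its image equals~$\RR$. Corestricting the codomain of~$i$ from $\Aut(F_2)$ to~$\RR$ then produces a homomorphism $\overline{\imath} : B_3 \to \RR$. This $\overline{\imath}$ is surjective because $i(B_3) = \RR$, and it is injective because $i$ is. A bijective group homomorphism is an isomorphism, so $\overline{\imath}$ witnesses the desired isomorphism $\RR \cong B_3$.

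There is no genuine obstacle here: the entire difficulty resides in the injectivity of~$i$, which Proposition~\ref{B3} established by transporting the problem to the homomorphism $f : B_4 \to \Aut(F_2)$ of~\cite{KR}, identifying $\ker i$ with an element of the center of~$B_3$, and ruling out nontrivial central elements via the abelianizations~$M_w$ and the commutator conjugation formula~\eqref{R-center30}. Given that result, the corollary is immediate, and it suffices to phrase it as the statement that $i$ is an isomorphism onto its image.
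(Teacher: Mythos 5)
Your proposal is correct and is exactly the paper's (implicit) argument: the paper derives the corollary directly from Proposition~\ref{B3}, the point being precisely that an injective homomorphism corestricts to an isomorphism onto its image~$\RR$. Nothing further is needed.
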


\section{The palindromization map and a cohomological interpretation}\label{def-pal}

We use the automorphisms $R_w$ of Section~\ref{automorphisms} to define
for each $w\in F_2$ an element~$\Pal(w) \in F_2$ by the formula
\begin{equation}\label{def-Pal}
\Pal(w) = b^{-1} a^{-1} \, R_w(ab)\, .
\end{equation}
Formula~\eqref{def-Pal} defines a map 
$\Pal: F_2 \to F_2\, ;\, w \mapsto \Pal(w)$,
which we call the \emph{palindromization map}.

As a consequence of the definition and of~\eqref{def-R}--\eqref{R-center1}, 
we easily obtain the following values of~$\Pal$:
\begin{equation}\label{values0}
\Pal(w) = w \quad \text{if} \;\; w = a^{r} \ \text{or}\ b^{r} \  \text{for some} \ r \in \ZZ\, ,
\end{equation}
\begin{equation*}
\Pal(ab) = aba \, , \quad
\Pal(ba) = bab \, ,
\end{equation*}
\begin{equation*}
\Pal(ba^{-1}) = a^{-1} \, , \quad
\Pal(ab^{-1}) = b^{-1} \, ,
\end{equation*}
\begin{equation*}
\Pal(a^{-1} b) = a^{-1} b a^{-1} \, ,\quad 
\Pal(b^{-1} a) = b^{-1} a b^{-1} \, ,
\end{equation*}
\begin{equation*}
\Pal(aba^{-1}) = \Pal(ab^{-1}a^{-1})
= \Pal(bab^{-1}) = \Pal(ba^{-1}b^{-1}) = 1\, ,
\end{equation*}
\begin{equation}\label{values3}
\begin{aligned}
\Pal(ab^{-1}a) & = \Pal(b^{-1}ab^{-1}) = b^{-2} \, , \\
\Pal(ba^{-1}b) & = \Pal(a^{-1}ba^{-1}) = a^{-2}\, ,
\end{aligned}
\end{equation} 
Thus the map $\Pal$ is not injective.
In Section~\ref{triv-pal}
we will characterize the pairs $(u,v)$ of elements of~$F_2$ such that 
$\Pal(u) \nolinebreak =  \nolinebreak \Pal(v)$. 

We now prove a few properties of the palindromization map.
First, we give another formula defining it and show that $\Pal$
is invariant under the exchange automorphism~$E$.

\begin{Lem}\label{def2-pal}
For all $w\in F_2$,
\begin{equation}\label{def2-Pal}
\Pal(w) = a^{-1} b^{-1} \, R_w(ba)
\end{equation}
and
\begin{equation*}
E\bigl(\Pal(w)\bigr) = \Pal\bigl( E(w) \bigr) \, .
\end{equation*}
\end{Lem}

\pf
Since by Lemma~\ref{commutator}, $R_w$ fixes $aba^{-1}b^{-1}$, we have
\begin{eqnarray*}
aba^{-1}b^{-1} R_w(ba) 
& = & R_w(aba^{-1}b^{-1}) \, R_w (ba)
= R_w(aba^{-1}b^{-1} ba) \\
& = & R_w (ab) = ab \Pal(w)\, .
\end{eqnarray*}
Canceling on the left by~$ab$, we obtain~\eqref{def2-Pal}.

Using~\eqref{Ebis}, \eqref{def-Pal}, and~\eqref{def2-Pal},
we obtain
\begin{eqnarray*}
E\bigl(\Pal(w)\bigr) 
& = &  E \bigl(b^{-1} a^{-1} R_w(ab) \bigr)
= a^{-1}b^{-1} \, E \bigl( R_w(ab) \bigr) \\
& = & a^{-1}b^{-1} \, R_{E(w)}\bigl( E(ab) \bigr) 
=  a^{-1}b^{-1} \, R_{E(w)}(ba) \\
& = & \Pal\bigl( E(w) \bigr) \, .
\end{eqnarray*}
\epf

\begin{Cor}\label{trivialPal}
(a) For $w\in F_2$,
\begin{equation*}
\Pal(w) = 1 \;\; \Longleftrightarrow \;\; R_w(ab) = ab \; \;
\Longleftrightarrow \;\; R_w(ba) = ba \, .
\end{equation*}

(b) The set $\Pal^{-1}(1) = \{ w\in F_2 \, | \, \Pal(w) = 1\}$
is a subgroup of~$F_2$.
\end{Cor}

\pf
Part\,(a) is an immediate consequence of~\eqref{def-Pal} and~\eqref{def2-Pal}.
Part\,(b) follows from Part\,(a).
\epf

The palindromization satisfies the following important functional equation.

\begin{Prop}\label{Pal-Justin}
For all $u$, $v \in F_2$, we have
\begin{equation}\label{Justin}
\Pal(uv) = \Pal(u) \, R_u\bigl( \Pal(v) \bigr) \, .
\end{equation}
\end{Prop}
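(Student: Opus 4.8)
The plan is to unwind both sides of~\eqref{Justin} using the defining formula~\eqref{def-Pal}, namely $\Pal(w) = b^{-1}a^{-1}R_w(ab)$, and to exploit that $w \mapsto R_w$ is a group homomorphism, so that $R_{uv} = R_u \circ R_v$. The left-hand side is $\Pal(uv) = b^{-1}a^{-1}R_{uv}(ab) = b^{-1}a^{-1}R_u\bigl(R_v(ab)\bigr)$, so everything comes down to rewriting $R_v(ab)$ in a way that lets the outer $R_u$ factor through cleanly.

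The key algebraic step is to observe that $\Pal(v) = b^{-1}a^{-1}R_v(ab)$ rearranges to $R_v(ab) = ab\,\Pal(v)$. Substituting this into the expression above gives
\begin{equation*}
\Pal(uv) = b^{-1}a^{-1}R_u\bigl(ab\,\Pal(v)\bigr) = b^{-1}a^{-1}R_u(ab)\,R_u\bigl(\Pal(v)\bigr),
\end{equation*}
where the second equality uses that $R_u$ is an automorphism, hence a group homomorphism that distributes over the product $ab\cdot\Pal(v)$. Now I recognize $b^{-1}a^{-1}R_u(ab) = \Pal(u)$ directly from~\eqref{def-Pal}, which yields $\Pal(uv) = \Pal(u)\,R_u\bigl(\Pal(v)\bigr)$, exactly the desired equation.

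This argument is essentially a one-line computation once the substitution $R_v(ab) = ab\,\Pal(v)$ is made, so there is no serious obstacle. The only point requiring a moment's care is the very first rewriting: to go from $R_{uv}$ to $R_u\circ R_v$ I am using that $w\mapsto R_w$ is a homomorphism, which is built into the definition of the $R_w$ in Section~\ref{automorphisms}. Everything else is formal manipulation inside $F_2$ together with the fact that $R_u$ respects products. I do not expect to need Lemma~\ref{commutator} or the braid-group structure here; the functional equation is a purely formal consequence of the homomorphism property and the definition of $\Pal$.
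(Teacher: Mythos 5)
Your proof is correct and follows exactly the paper's argument: both rely on $R_{uv}=R_u\circ R_v$, the rewriting $R_v(ab)=ab\,\Pal(v)$, and the fact that $R_u$ is an automorphism; the paper merely phrases it as computing $ab\,\Pal(uv)$ and canceling $ab$ on the left, while you keep the prefix $b^{-1}a^{-1}$ throughout. No gaps.
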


\pf
We have
\begin{eqnarray*}
ab \Pal(uv) & = & R_{uv} (ab) = R_u\bigl( R_v(ab)\bigr) \\
& = & R_u(ab \Pal(v)) = R_u(ab) \, R_u(\Pal(v)) \\
& = & ab \Pal(u) \, R_u(\Pal(v))\, .
\end{eqnarray*}
We conclude by canceling on the left by~$ab$.
\epf

Equation~\eqref{Justin} has an interesting interpretation in the language of 
Serre's \emph{non-abelian cohomology}, 
whose definition we recall in Appendix~\ref{Serre}.
The free group~$F_2$ acts on itself via the group homomorphism 
$$F_2 \to \Aut(F_2) \, ; \ w \mapsto R_w \, .$$
It follows from~\eqref{def-Pal} that the function $\Pal: F_2 \to F_2$ is a 
\emph{trivial cocycle} in the sense of Appendix~A; see~\eqref{null} with $X = ab$. 
Therefore $\Pal$ satisfies a cocycle condition of the form~\eqref{cocycle},
which in our case is nothing else that Equation~\eqref{Justin}.

Let $F_2 \rtimes F_2$ be the semi-direct product associated to
the action $w \mapsto R_w$ of~$F_2$ on itself; 
it is the set $F_2 \times F_2$ equipped with the product
\begin{equation*}
(w_1,u)\, (w_2,v) = (w_1 \, R_u(w_2), uv)
\end{equation*}
for all $w_1, w_2 \in F_2$ and $u, v \in F_2$.
Now consider the map
$$\widehat{\Pal} = (\Pal, \id): F_2 \to F_2 \rtimes F_2 \, ; \ u \mapsto \bigl( \Pal(u),u \bigr) \, .$$
Since $\Pal$ satisfies the cocycle equation~\eqref{Justin},
it follows from a direct computation or from~\eqref{homomorphism} 
that $\widehat{\Pal}$ is a group homomorphism, i.e.,
\begin{equation*}
\widehat{\Pal}(uv) = \widehat{\Pal}(u) \, \widehat{\Pal}(v) 
\end{equation*}
for all $u,v \in F_2$.
We have even better: $\Pal$ being a trivial cocycle, it follows from~\eqref{def-Pal}
and~\eqref{conjugate-trivial} that for all $u\in F_2$,
\begin{equation*}
\widehat{\Pal}(u) = (ab,1)^{-1} \, (1,u) \, (ab,1)
\end{equation*}
in the semi-direct product $F_2 \rtimes F_2$.

\section{Properties}\label{pal-pal}

In this section we show that each element $\Pal(w)$ is a palindrome in~$F_2$
and that our map~$\Pal$ extends de Luca's right iterated palindromic closure;
we also compute the image of~$\Pal(w)$ in the free abelian group~$\ZZ^2$.

By definition, a \emph{palindrome} in~$F_2$ is an element
fixed by the anti-auto\-mor\-phism $\omega$ of~$F_2$ 
introduced in the proof of Proposition~\ref{char-commutator}.
The first property of~$\Pal$ that we prove in this section is the following.

\begin{Prop}\label{palindrome}
We have $\omega \bigl( \Pal(w) \bigr) = \Pal(w)$ for all $w \in F_2$.
\end{Prop}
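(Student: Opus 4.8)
The plan is to prove that $\omega(\Pal(w)) = \Pal(w)$ by exploiting the defining formula $\Pal(w) = b^{-1}a^{-1}\,R_w(ab)$ together with the relationship between the mirror anti-automorphism $\omega$ and the automorphisms $R_w$. The key observation I would pursue is that $\omega$ should intertwine $R_w$ with the automorphisms $L_w$ appearing in~\eqref{values-L}, since~\eqref{def-L} already tells us that $R_a = \omega \circ L_a \circ \omega$ and $R_b = \omega \circ L_b \circ \omega$. More generally, I expect $\omega \circ R_w \circ \omega$ to equal $L_w$ (the image of $w$ under the homomorphism $w \mapsto L_w$), so that applying $\omega$ to $R_w(ab)$ converts the problem into a statement about $L_w$.

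First I would establish the identity $\omega \circ R_w \circ \omega = L_w$ for all $w \in F_2$. By~\eqref{def-L} this holds on the generators $a,b$, and since both sides are (anti)composed homomorphisms in $w$, it extends to all of $F_2$; equivalently $\omega \circ R_w = L_w \circ \omega$. Next I would compute $\omega(\Pal(w))$ directly: applying $\omega$ to $b^{-1}a^{-1}\,R_w(ab)$ and using that $\omega$ is an anti-automorphism fixing $a$ and $b$, I get
\begin{equation*}
\omega\bigl(\Pal(w)\bigr) = \omega\bigl(R_w(ab)\bigr)\,\omega(a^{-1})\,\omega(b^{-1}) = \omega\bigl(R_w(ab)\bigr)\,a^{-1}b^{-1}.
\end{equation*}
Since $\omega(ab) = ba$, the intertwining identity gives $\omega(R_w(ab)) = L_w(\omega(ab)) = L_w(ba)$. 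So the goal reduces to showing $L_w(ba)\,a^{-1}b^{-1} = \Pal(w) = b^{-1}a^{-1}\,R_w(ab)$.

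The main obstacle will be matching these two expressions, i.e.\ proving $L_w(ba)\,a^{-1}b^{-1} = b^{-1}a^{-1}\,R_w(ab)$. I would try to recognize the left-hand side as an alternative formula for $\Pal(w)$ analogous to the one in Lemma~\ref{def2-pal}. A cleaner route is to compare with the two existing formulas for $\Pal$: by~\eqref{def2-Pal} we have $\Pal(w) = a^{-1}b^{-1}\,R_w(ba)$. Applying the intertwining identity to this expression instead, $\omega(R_w(ba)) = L_w(\omega(ba)) = L_w(ab)$, and the $L$-analogue of the commutator-fixing Lemma~\ref{commutator} (each $L_w$ fixes $b^{-1}a^{-1}ba$, which follows since $\omega \circ R_w \circ \omega = L_w$ and $R_w$ fixes $aba^{-1}b^{-1}$) should let me rewrite $L_w(ab)$ and $L_w(ba)$ in the required form. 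The heart of the argument is thus this commutator-invariance bookkeeping for the $L_w$, converting one palindromization formula into the mirror of the other; once that algebraic identity is verified on generators and propagated, the two formulas for $\Pal(w)$ coincide with $\omega(\Pal(w))$ and the proposition follows.
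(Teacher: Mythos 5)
Your opening reduction is sound and in fact follows the paper's own route: the intertwining identity $\omega \circ R_w \circ \omega = L_w$ is correct (both $w \mapsto \omega \circ R_w \circ \omega$ and $w \mapsto L_w$ are homomorphisms into $\Aut(F_2)$, since $\omega^2 = \id$, and they agree on $a,b$ by~\eqref{def-L}), and it turns the proposition into the single identity $L_w(ba)\,a^{-1}b^{-1} = \Pal(w)$, i.e.\ $L_w(ba) = \Pal(w)\,ba$, which is precisely the paper's Lemma~\ref{coJustin}. The gap is in how you propose to prove that identity: the plan is circular. The intertwining identity itself gives
\begin{equation*}
L_w(ba) = \omega\bigl(R_w(\omega(ba))\bigr) = \omega\bigl(R_w(ab)\bigr) = \omega\bigl(ab\,\Pal(w)\bigr) = \omega\bigl(\Pal(w)\bigr)\,ba\,,
\end{equation*}
so the target $L_w(ba) = \Pal(w)\,ba$ is literally a restatement of $\omega(\Pal(w)) = \Pal(w)$; nothing has been gained. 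The tools you then invoke cannot break this circle: the commutator-invariance of $L_w$ (which does hold) only yields $L_w(ab) = L_w(ba)\cdot a^{-1}b^{-1}ab$, the mirror of Lemma~\ref{def2-pal}, relating $L_w(ab)$ to $L_w(ba)$ but never relating either of them to $\Pal(w)$ without an $\omega$ attached. All the identities you cite --- the two formulas \eqref{def-Pal} and \eqref{def2-Pal}, the intertwining, and commutator-invariance of $R_w$ and of $L_w$ --- are permuted among themselves by $\omega$, so any chain of them starting from $\omega(\Pal(w))$ ends at an expression still carrying an $\omega$; they are consistent with the palindrome property but do not imply it.

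The missing ingredient is a relation between the $L$'s and the $R$'s that does \emph{not} pass through $\omega$, namely~\eqref{conjLR}: $L_a(u) = a\,R_a(u)\,a^{-1}$ and $L_b(u) = b\,R_b(u)\,b^{-1}$. The paper combines~\eqref{conjLR} with the cocycle equation~\eqref{Justin} in an induction on the length of $w$ to prove Lemma~\ref{coJustin}. An equivalent, induction-free way to finish your argument: the cocycle equation shows that $w \mapsto \bigl(u \mapsto \Pal(w)\,R_w(u)\,\Pal(w)^{-1}\bigr)$ is a homomorphism $F_2 \to \Aut(F_2)$, and by~\eqref{conjLR} and $\Pal(a)=a$, $\Pal(b)=b$ it agrees with $w \mapsto L_w$ on the generators; hence $L_w(u) = \Pal(w)\,R_w(u)\,\Pal(w)^{-1}$ for all $w,u$, and evaluating at $u = ba$ using $R_w(ba) = ba\,\Pal(w)$ from~\eqref{def2-Pal} gives $L_w(ba) = \Pal(w)\,ba$ as required. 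Without~\eqref{conjLR} (or some equivalent input tying $L_w$ to $R_w$ inside $\Aut(F_2)$), the argument cannot close.
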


Let $L_a$ and $L_b$ be the automorphisms defined by~\eqref{values-L}.
One easily checks that for all $w\in F_2$,
\begin{equation}\label{conjLR}
L_a(w) = a \, R_a(w) \, a^{-1}
\quad\text{and}\quad
L_b(w) = b \, R_b(w) \, b^{-1} \, .
\end{equation}

Let $w\mapsto L_w$ be the homomorphism $F_2 \to \Aut(F_2)$ sending
$a$ to~$L_a$ and $b$ to~$L_b$.
We have the following lemma reminiscent of~\eqref{def2-Pal}.

\begin{Lem}\label{coJustin}
We have 
$L_w(ba) = \Pal(w)\,  ba$
for all $w \in F_2$.
\end{Lem}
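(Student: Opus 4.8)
The plan is to prove the identity $L_w(ba) = \Pal(w)\, ba$ by reducing it, via the conjugation relations~\eqref{conjLR}, to the already-known definition~\eqref{def-Pal} of $\Pal$. First I would recall that $\Pal(w) = b^{-1}a^{-1}\, R_w(ab)$, so the claim is equivalent to
\begin{equation*}
L_w(ba) = b^{-1}a^{-1}\, R_w(ab)\, ba \, .
\end{equation*}
Since both sides are defined via the homomorphisms $w \mapsto L_w$ and $w \mapsto R_w$, the natural strategy is an induction on the length of a reduced word representing~$w$, using the multiplicativity of these maps together with the functional equation~\eqref{Justin} for $\Pal$.

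The key step is to exploit the relation~\eqref{conjLR}, which expresses $L_a$ and $L_b$ as conjugates of $R_a$ and $R_b$. The cleanest approach is to find the analogous global statement: I expect that $L_w$ is the conjugate of $R_w$ by a suitable element depending on~$w$, and that element should turn out to be $\Pal(w)\cdot(\text{something fixed})$, or else that $L_w(x)\, y = z\, R_w(x)\, y$ for the right choices of $x,y,z$. Concretely, I would try to show by induction that $L_w(u) = \Pal(w)\cdot ab \cdot R_w(u) \cdot (\Pal(w)\cdot ab)^{-1}$, or some such conjugation formula generalizing~\eqref{conjLR}; evaluating at $u = ba$ and using that $R_w$ fixes $aba^{-1}b^{-1}$ (Lemma~\ref{commutator}) should then collapse the right-hand side to $\Pal(w)\, ba$. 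An alternative, possibly shorter, route is to verify the base cases $w = a^{\pm 1}, b^{\pm 1}$ directly from~\eqref{conjLR} and then propagate using the cocycle identity~\eqref{Justin} alongside the multiplicativity $L_{uv} = L_u \circ L_v$.

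The main obstacle will be the interaction between the conjugating prefixes when passing from $L_u$ to $L_{uv}$: the naive inductive step produces a term $L_u\bigl(L_v(ba)\bigr) = L_u\bigl(\Pal(v)\, ba\bigr)$, and I must show this equals $\Pal(uv)\, ba = \Pal(u)\, R_u(\Pal(v))\, ba$ by~\eqref{Justin}. This forces me to understand how $L_u$ acts on the product $\Pal(v)\, ba$, which reintroduces the $R_u$-versus-$L_u$ discrepancy. The fortunate cancellation should come precisely from the commutator-fixing property: rewriting $L_u(x) = (\text{conjugator})\, R_u(x)\, (\text{conjugator})^{-1}$ and noting that the conjugator for the factor $ba$ and the conjugator produced by $\Pal(v)$ telescope. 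I would carry out this telescoping carefully, checking that the prefix $\Pal(u)$ emerges on the left and that the trailing $ba$ survives unconjugated because $R_u$ fixes the commutator $aba^{-1}b^{-1}$.
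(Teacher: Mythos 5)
Your ``alternative, possibly shorter, route'' is exactly the paper's proof: induction on the length of a reduced word for~$w$, prepending a generator on the left, and computing
$L_{a^{\pm 1}}\bigl(\Pal(w)\, ba\bigr)
= a^{\pm 1} R_{a^{\pm 1}}(\Pal(w))\, a^{\mp 1}\cdot a^{\pm 1} ba
= \Pal(a^{\pm 1})\, R_{a^{\pm 1}}(\Pal(w))\, ba
= \Pal(a^{\pm 1}w)\, ba$,
using \eqref{conjLR}, \eqref{values-L}, \eqref{values0} and the cocycle identity \eqref{Justin}. Your displayed inductive step has the new factor $u$ on the \emph{left}, which is the correct (and necessary) direction: \eqref{Justin} reads $\Pal(uv)=\Pal(u)\,R_u(\Pal(v))$ with the induction hypothesis carried by~$v$; appending on the right would instead require \eqref{Justin-bis}, which the paper only proves \emph{after} this lemma (via Proposition~\ref{palindrome}), so that order would be circular.

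Two corrections to your first route, though. The conjugation formula you guess is wrong as stated: the conjugator is $\Pal(w)$ alone, not $\Pal(w)\cdot ab$; the correct global statement is
$L_w(x) = \Pal(w)\, R_w(x)\, \Pal(w)^{-1}$,
equivalently $L_w = \Ad\bigl((ab)^{-1}\bigr)\circ R_w \circ \Ad(ab)$ inside $\Aut(F_2)$, since $(ab)^{-1}R_w(ab)=\Pal(w)$. (With your conjugator $\Pal(w)\,ab$ you would need $ab$ to commute with every $R_w(x)$, which fails already for $w=1$.) The corrected formula follows by the very same left-prepending induction, because $w\mapsto\Pal(w)$ and the conjugator defined by $L_w=\Ad(c_w)\circ R_w$ satisfy the same cocycle recursion \eqref{Justin} and agree on $a^{\pm1},b^{\pm1}$. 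Second, your appeal to the commutator-fixing property (Lemma~\ref{commutator}) is misplaced for the paper's route: there the trailing $ba$ survives simply because $L_{a^{\pm 1}}(ba)=a^{\pm 1}ba$ by \eqref{values-L}, and the cancellation is the trivial $a^{\mp1}a^{\pm1}=1$. The commutator property does enter if you finish via the global formula, namely when evaluating at $x=ba$: there one needs $R_w(ba)=ba\,\Pal(w)$, i.e.\ \eqref{def2-Pal}, whose proof (Lemma~\ref{def2-pal}) is where Lemma~\ref{commutator} is actually used.
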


\pf
We prove the lemma by induction on the length of a reduced word
representing~$w\in F_2$.
If $w=1$, then the lemma holds trivially. 
Suppose that $L_w(ba) = \Pal(w) \, ba$ for some $w\in F_2$ and let
us prove the lemma for $a^{\pm 1}w$ and~$b^{\pm 1}w$.
We have
\begin{eqnarray*}
L_{a^{\pm 1}w}(ba) & = & L_{a^{\pm 1}} \bigl( L_w(ba) \bigr)
= L_{a^{\pm 1}} \bigl( \Pal(w) \, ba \bigr) \\
& = & L_{a^{\pm 1}}(\Pal(w)) \, L_{a^{\pm 1}}(ba)
=  L_{a^{\pm 1}}(\Pal(w)) \, a^{\pm 1}ba \\
& = & a^{\pm 1} \, R_{a^{\pm 1}} (\Pal(w) ) \, a^{\mp 1} a^{\pm 1} ba 
=  \Pal(a^{\pm 1}) \, R_{a^{\pm 1}} (\Pal(w) ) \, ba \\
& = & \Pal(a^{\pm 1}w) \, ba\, .
\end{eqnarray*}
The second equality holds by induction, the fourth by~\eqref{values-L},
the fifth by~\eqref{conjLR}, the sixth by~\eqref{values0},
and the seventh by~\eqref{Justin}.
One proves the lemma for~$b^{\pm 1}w$ in a similar fashion.
\epf

\begin{proof}[Proof of Proposition~\ref{palindrome}]
By~\eqref{def-Pal}, \eqref{def-L}, and Lemma~\ref{coJustin},
\begin{eqnarray*}
ab \, \omega\bigl(\Pal(w) \bigr)
& = & \omega\bigl(\Pal(w) \, ba \bigr)\\
& = & \omega\bigl(L_w(ba) \bigr)
= R_w \bigl( \omega(ba) \bigr) \\
& = & R_w(ab) = ab \Pal(w) 
\end{eqnarray*}
for all $w\in F_2$. The conclusion follows immediately.
\end{proof}

We next show that
the map $\Pal$ satisfies an equation established by Justin~\cite{Ju}
for de Luca's map $\PP : \{a,b\}^* \to \{a,b\}^*$.

\begin{Prop}\label{Pal-Justin-bis}
For all $u$, $v \in F_2$, we have
\begin{equation}\label{Justin-bis}
\Pal(uv) = L_u\bigl( \Pal(v) \bigr)  \Pal(u) \, .
\end{equation}
\end{Prop}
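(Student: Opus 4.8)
The plan is to prove the Justin-type equation \eqref{Justin-bis} by combining Lemma~\ref{coJustin} with the already-established cocycle equation \eqref{Justin} and the conjugation formulas \eqref{conjLR}. The key observation is that I now have two equivalent descriptions of the relevant data: the $R$-cocycle formula $\Pal(uv) = \Pal(u)\, R_u(\Pal(v))$ from Proposition~\ref{Pal-Justin}, and the identity $L_w(ba) = \Pal(w)\, ba$ from Lemma~\ref{coJustin}, which lets me manipulate $\Pal$ through the $L_w$ homomorphism. Since the target formula expresses $\Pal(uv)$ with the factors in the opposite order to \eqref{Justin} and with $L_u$ in place of $R_u$, I expect the bridge between $L$ and $R$ to be the conjugation relation, extended from the generators to all of $F_2$.

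First I would establish the global version of \eqref{conjLR}, namely that $L_w(u) = \Pal(w)\, R_w(u)\, \Pal(w)^{-1}$ for all $w,u \in F_2$. This is the natural candidate because \eqref{conjLR} gives the generator case $L_a(u) = a\,R_a(u)\,a^{-1}$ with $a = \Pal(a)$ by \eqref{values0}, and an induction on the word length of $w$ should propagate it: writing $w = x w'$ for a generator $x^{\pm1}$ and composing $L_x \circ L_{w'}$, one uses the inductive hypothesis, the generator formula, and the cocycle equation \eqref{Justin} to reassemble the conjugating element as $\Pal(xw')$. Alternatively, this conjugation identity can be read off directly from Lemma~\ref{coJustin} together with $L_w(ab) = R_w(ab)$-type manipulations, since both $L_w$ and conjugation-composed-with-$R_w$ are homomorphisms agreeing on generators.

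Once that global conjugation formula is in hand, the proof of \eqref{Justin-bis} is a short computation. Starting from $L_u(\Pal(v))\,\Pal(u)$, I substitute $L_u(\Pal(v)) = \Pal(u)\, R_u(\Pal(v))\, \Pal(u)^{-1}$, so that the expression becomes
\begin{equation*}
L_u\bigl(\Pal(v)\bigr)\,\Pal(u) = \Pal(u)\, R_u\bigl(\Pal(v)\bigr)\,\Pal(u)^{-1}\,\Pal(u) = \Pal(u)\, R_u\bigl(\Pal(v)\bigr),
\end{equation*}
and the right-hand side equals $\Pal(uv)$ by Proposition~\ref{Pal-Justin}. This is clean because the two $\Pal(u)$ terms cancel and what remains is exactly the $R$-cocycle formula.

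The main obstacle I anticipate is the induction step for the global conjugation identity: one must be careful that the inductive hypothesis is stated for \emph{all} arguments $u$ simultaneously (so that it can be applied to $L_{w'}(u)$ inside $L_x(L_{w'}(u))$), and that the cocycle equation \eqref{Justin} is invoked with the correct factorization $w = x w'$ to rebuild $\Pal(x w')$ from $\Pal(x)$ and $R_x(\Pal(w'))$. Handling the inverse generators $x = a^{-1}, b^{-1}$ requires the values in \eqref{values0} and the inverse transvection formulas, but this parallels the generator computation already carried out in the proof of Lemma~\ref{coJustin}, so it should present no genuine difficulty beyond bookkeeping.
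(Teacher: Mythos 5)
Your proof is correct, but it takes a genuinely different route from the paper. The paper deduces \eqref{Justin-bis} by applying the mirror anti-automorphism $\omega$ to the $R$-cocycle equation \eqref{Justin}: since $L_w = \omega \circ R_w \circ \omega$ by \eqref{def-L}, and since $\Pal(u)$, $\Pal(v)$, $\Pal(uv)$ are all palindromes (Proposition~\ref{palindrome}), the identity $L_u(\Pal(v))\,\Pal(u) = \omega\bigl(\Pal(u)\,R_u(\Pal(v))\bigr) = \omega(\Pal(uv)) = \Pal(uv)$ drops out in three lines. You instead establish the global conjugation identity $L_w(x) = \Pal(w)\,R_w(x)\,\Pal(w)^{-1}$ and then cancel; your induction is sound (the base case covers inverse generators via $L_{a^{-1}} = \Ad(a^{-1})\circ R_{a^{-1}}$, which follows from \eqref{conjLR} and $R_a^{-1}(a)=a$, and the inductive step rebuilds $\Pal(xw)$ from the generator case together with \eqref{Justin}, exactly as you describe). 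A cleaner packaging of your key step: $w \mapsto \Ad(\Pal(w))\circ R_w$ is a group homomorphism $F_2 \to \Aut(F_2)$ \emph{precisely because} $\Pal$ satisfies the cocycle equation \eqref{Justin}, and it agrees with $w\mapsto L_w$ on $a$ and $b$ by \eqref{conjLR} and \eqref{values0}, hence everywhere. What the two approaches buy: the paper's proof is shorter given that Proposition~\ref{palindrome} is already in hand, and it exhibits \eqref{Justin-bis} as the literal mirror image of \eqref{Justin}; yours bypasses palindromicity entirely and produces, en route, the identity $L_w = \Ad(\Pal(w))\circ R_w$, which is of independent interest and strengthens Lemma~\ref{coJustin} (apply it to $ba$ and use \eqref{def2-Pal} to recover that lemma). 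One caution: your parenthetical alternative of reading the conjugation identity "directly" off Lemma~\ref{coJustin} does not work as stated, since two automorphisms agreeing on the single element $ba$ need not coincide; stick with the induction or the homomorphisms-agreeing-on-generators argument.
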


\pf
By Proposition~\ref{palindrome}, $\Pal(u)$, $\Pal(v)$, and $\Pal(uv)$
are palindromes. We thus obtain
\begin{eqnarray*}
L_u\bigl( \Pal(v) \bigr) \Pal(u)
& = & \omega\bigl( R_u \bigl( \omega(\Pal(v)) \bigr) \bigr) \, \omega \bigl( \Pal(u) \bigr) \\
& = & \omega\bigl( \Pal(u) \, R_u(\Pal(v)) \bigr) \\
& = & \omega\bigl( \Pal(uv)\bigr) \\
& = & \Pal(uv) \, .
\end{eqnarray*}
The first equality follows from~\eqref{def-L}
and the third one from~\eqref{Justin}.
\epf

Since $L_a$ and $L_b$ obviously preserve the (free) submonoid~$\{a,b\}^*$ of~$F_2$,
it follows from~\eqref{Justin-bis} that
the restriction of~$\Pal$ to~$\{a,b\}^*$ takes its values in~$\{a,b\}^*$.

\begin{Cor}\label{Cor}
The restriction of~$\Pal$ to the monoid $\{a,b\}^*$ coincides with de Luca's
right iterated palindromic closure $\PP : \{a,b\}^* \to \{a,b\}^*$.
\end{Cor}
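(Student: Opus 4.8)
The plan is to prove the sharper assertion that $\Pal(w) = \PP(w)$ for every $w \in \{a,b\}^*$, by induction on the length of a word. The comparison makes sense because the remark preceding the statement guarantees that $\Pal(w) \in \{a,b\}^*$ when $w \in \{a,b\}^*$, and Proposition~\ref{palindrome} tells us that each such value is a palindrome; the base case is $\Pal(1) = 1 = \PP(1)$. The idea is to show that the restriction of $\Pal$ to $\{a,b\}^*$ obeys the same recursion as de Luca's map, so that the two coincide by uniqueness.

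The quickest route specialises Proposition~\ref{Pal-Justin-bis} to $v = x$ with $x \in \{a,b\}$. Since $\Pal(x) = x$ by~\eqref{values0}, this gives $\Pal(wx) = L_w(x)\,\Pal(w)$ for all $w \in F_2$ and all letters~$x$. Thus the restriction of $\Pal$ to $\{a,b\}^*$ satisfies the recursion $f(wx) = L_w(x)\,f(w)$ together with $f(1) = 1$, and this data determines a map on $\{a,b\}^*$ uniquely, by induction on length. Now Justin~\cite{Ju} proved that de Luca's closure $\PP$ satisfies exactly this functional equation --- the morphisms $L_a, L_b$ being de Luca's $G, D$ --- so $\PP$ obeys the same recursion with the same initial value. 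Hence the two maps agree on $\{a,b\}^*$. I expect the main subtlety here to be purely expository: one must make sure that the operator $L_w$ occurring in Proposition~\ref{Pal-Justin-bis} is literally the composition of elementary morphisms appearing in Justin's formula, and that the recursion genuinely pins the map down from the single value $f(1) = 1$.

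Should one prefer an argument resting directly on de Luca's definition $\PP(wx) = (\PP(w)\,x)^+$ rather than on the citation of~\cite{Ju}, the shape of $\Pal(wx)$ is still easy to obtain: from the cocycle identity~\eqref{Justin} and $\Pal(x) = x$ one gets $\Pal(wx) = \Pal(w)\,R_w(x)$, and a one-line induction on $|w|$ --- using that each of $R_a(a)=a$, $R_a(b)=ba$, $R_b(a)=ab$, $R_b(b)=b$ in~\eqref{def-R} begins with its argument --- shows that $R_w(x)$ begins with the letter~$x$ for $w \in \{a,b\}^*$. Consequently $\Pal(wx)$ is a palindrome in $\{a,b\}^*$ having $\Pal(w)\,x = \PP(w)\,x$ as a prefix, hence \emph{a} palindromic right-extension of $\PP(w)\,x$. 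The genuine obstacle on this route is the minimality: one must show that $\Pal(wx)$ is the \emph{shortest} such palindrome, i.e.\ that the tail $R_w(x)$ is exactly the reflection that completes $\PP(w)\,x$ to $(\PP(w)\,x)^+$. Establishing this amounts to controlling the longest palindromic suffix of $\PP(w)\,x$, which is precisely the combinatorial content of Justin's theorem; this is where all the real work would lie, and it is the reason I would favour the recursion-comparison argument above.
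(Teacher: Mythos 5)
Your proposal is correct and takes essentially the same route as the paper: both arguments rest on Proposition~\ref{Pal-Justin-bis} together with Justin's result from~\cite{Ju} that $\PP$ satisfies (and is characterized by) the functional equation~\eqref{Justin-bis} with $\PP(a)=a$, $\PP(b)=b$, plus the preceding remark that $\Pal$ preserves $\{a,b\}^*$. The only difference is that you unpack the uniqueness into an explicit induction on length after specializing~\eqref{Justin-bis} to single letters, whereas the paper simply cites~\cite{Ju} for the uniqueness statement.
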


\pf
De Luca's map $\PP$ is by~\cite{Ju} the unique map $\{a,b\}^*  \to  \{a,b\}^*$
fixing $a, b$ and satisfying~\eqref{Justin-bis}.
\epf

\begin{Rem}
Our palindromization map $\Pal$ cannot be obtained 
as a right iterated palindromic closure as is the case when $w\in \{a,b\}^*$. 
Indeed, the right iterated palindromic closure in the free monoid~$\{a,b,a^{-1}, b^{-1}\}^*$
of the word~$ab^{-1}a$ is
$$\bigl( (a^+ b^{-1})^+ a \bigr)^+ = (ab^{-1}aa)^+ = ab^{-1}aab^{-1}a\, , $$
whereas $\Pal(ab^{-1}a) = b^{-2}$ by~\eqref{values3}.
Moreover, since $b^+ = b$ and
$$\bigl( (ba)^+ a^{-1}\bigr)^+ = (baba^{-1})^+ = baba^{-1}bab \neq b\, ,$$
we see that an iterated palindromic closure cannot be defined on~$F_2$ in 
a naive way.
\end{Rem}

In~\cite[Sect.~3]{BLR}, the image $\pi \bigl( \PP(w) \bigr)$ in~$\ZZ^2$
was computed for $w\in \{a,b\}^*$. This computation extends easily
to the whole group~$F_2$ as an immediate consequence of the definition of~$\Pal$.

\begin{Prop}\label{M}
For all $w \in F_2$, we have
$$\pi\bigl( \Pal(w) \bigr) = \bigl(M_w - I_2 \bigr) 
\begin{pmatrix}
1 \\
1
\end{pmatrix} 
\, . $$
\end{Prop}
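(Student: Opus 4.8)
The plan is to compute directly, using the defining formula $\Pal(w) = b^{-1}a^{-1}\,R_w(ab)$ from~\eqref{def-Pal} together with the abelianization relation $\pi \circ R_w = M_w \circ \pi$ that defines~$M_w$. Since $\pi : F_2 \to \ZZ^2$ is a group homomorphism onto the \emph{additive} group~$\ZZ^2$, applying~$\pi$ to~\eqref{def-Pal} turns the multiplicative concatenation into a sum of two terms, one coming from the prefix $b^{-1}a^{-1}$ and one from $R_w(ab)$.

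First I would record the two elementary values. On one hand,
\begin{equation*}
\pi(ab) = \pi(a) + \pi(b) =
\begin{pmatrix} 1 \\ 0 \end{pmatrix} + \begin{pmatrix} 0 \\ 1 \end{pmatrix}
= \begin{pmatrix} 1 \\ 1 \end{pmatrix},
\end{equation*}
so that $\pi\bigl(b^{-1}a^{-1}\bigr) = -\pi(ab) = -\binom{1}{1}$. On the other hand, the defining property of~$M_w$ gives
\begin{equation*}
\pi\bigl( R_w(ab) \bigr) = M_w\, \pi(ab) = M_w \begin{pmatrix} 1 \\ 1 \end{pmatrix}.
\end{equation*}

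Combining these via additivity of~$\pi$ then yields
\begin{equation*}
\pi\bigl( \Pal(w) \bigr)
= \pi\bigl(b^{-1}a^{-1}\bigr) + \pi\bigl( R_w(ab) \bigr)
= M_w \begin{pmatrix} 1 \\ 1 \end{pmatrix} - \begin{pmatrix} 1 \\ 1 \end{pmatrix}
= (M_w - I_2) \begin{pmatrix} 1 \\ 1 \end{pmatrix},
\end{equation*}
which is exactly the claimed formula. There is essentially no obstacle here: the statement follows in one line from the definition of~$\Pal$ and of~$M_w$. The only point requiring a little care is bookkeeping of conventions, namely that $M_w$ acts on $\ZZ^2$ on the left as a matrix on column-vectors (as fixed right after the definition of~$\pi$) and that $\pi$ lands in the additive group, so that inverses become sign changes and products become sums. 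Once those conventions are in place the computation is immediate.
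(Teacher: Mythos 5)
Your proof is correct and is essentially identical to the paper's: both apply the homomorphism~$\pi$ to the defining formula~\eqref{def-Pal}, use additivity to split off $\pi\bigl(b^{-1}a^{-1}\bigr) = -\binom{1}{1}$, and invoke $\pi \circ R_w = M_w \circ \pi$ to evaluate $\pi\bigl(R_w(ab)\bigr)$. There is nothing to add.
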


\pf
Since $\pi \circ R_w = M_w \circ \pi $, we have
\begin{eqnarray*}
\pi\bigl( \Pal(w) \bigr) 
& = & \pi\bigl((ab)^{-1} R_w(ab) \bigr) 
= \pi\bigl((ab)^{-1}\bigr) + \pi\bigl( R_w(ab) \bigr) \\
& = & \begin{pmatrix}
-1 \\
-1
\end{pmatrix} 
+ M_w 
\begin{pmatrix}
1 \\
1
\end{pmatrix} 
= \bigl(M_w - I_2 \bigr) 
\begin{pmatrix}
1 \\
1
\end{pmatrix} 
\, . 
\end{eqnarray*}
\epf

\begin{Rem}
By~\eqref{def-Pal} and~\eqref{def2-Pal}, $X = ab$ and $X=ba$
are solutions of the equations
\begin{equation}\label{X}
\Pal(w) = X^{-1} \, R_w(X) \qquad (w\in F_2) \, .
\end{equation}
Using Proposition~\ref{M}, it is easy to check that any solution~$X$ of~\eqref{X}
necessarily satisfies $\pi(X) = (1,1) \in \ZZ^2$. 
Therefore, $X = ab$ and $X=ba$ are the only solutions of~\eqref{X}
in the monoid~$\{a,b\}^*$. Indeed, $\pi(X) = (1,1)$ for $X \in \{a,b\}^*$
means that the word~$X$ has exactly 
one occurrence of~$a$ and one occurrence of~$b$.
In the free group~$F_2$, Equation~\eqref{X} has infinitely many solutions
such as $X_r = (aba^{-1}b^{-1})^r ab$, where $r\in \ZZ$.
Note that $X_0 = ab$ and $X_{-1} = ba$.
\end{Rem}

\section{Elements with equal palindromization}\label{triv-pal}

The aim of this section is to characterize all pairs $(u,v)$ of elements of~$F_2$ 
such that $\Pal(u) = \Pal(v)$. 
We start with the following observation.

\begin{Prop}\label{Kernel2}
We have $\Pal(u) = \Pal(v)$ if and only if~$u^{-1}v \in \Pal^{-1}(1)$.
\end{Prop}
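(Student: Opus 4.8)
The plan is to deduce this directly from the functional equation~\eqref{Justin} and the group structure of~$\Pal^{-1}(1)$ established in Corollary~\ref{trivialPal}(b). The key identity I would exploit is the cocycle relation
\begin{equation*}
\Pal(uv) = \Pal(u)\, R_u\bigl(\Pal(v)\bigr),
\end{equation*}
which lets me compare $\Pal$ on products of group elements. Writing $v = u\,(u^{-1}v)$ and applying the cocycle relation with the pair $\bigl(u, u^{-1}v\bigr)$, I obtain
\begin{equation*}
\Pal(v) = \Pal(u)\, R_u\bigl(\Pal(u^{-1}v)\bigr).
\end{equation*}

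The forward implication is then immediate: if $u^{-1}v \in \Pal^{-1}(1)$, then $\Pal(u^{-1}v) = 1$, so $R_u\bigl(\Pal(u^{-1}v)\bigr) = R_u(1) = 1$ (since each $R_u$ is an automorphism and hence fixes the neutral element), and the displayed identity collapses to $\Pal(v) = \Pal(u)$. For the converse, suppose $\Pal(u) = \Pal(v)$. The same identity gives $\Pal(u) = \Pal(u)\, R_u\bigl(\Pal(u^{-1}v)\bigr)$, and canceling $\Pal(u)$ on the left yields $R_u\bigl(\Pal(u^{-1}v)\bigr) = 1$. Since $R_u$ is an automorphism of~$F_2$, it is injective and sends only the neutral element to~$1$; therefore $\Pal(u^{-1}v) = 1$, i.e., $u^{-1}v \in \Pal^{-1}(1)$.

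I do not anticipate a genuine obstacle here, as the statement is essentially a formal consequence of the cocycle property. The only point requiring a word of care is the invertibility of~$R_u$: one must note that $R_u \in \Aut(F_2)$, so $R_u(g) = 1$ forces $g = 1$; this is what makes the converse work and also guarantees $R_u(1) = 1$ for the forward direction. The subgroup structure from Corollary~\ref{trivialPal}(b) is not strictly needed for this particular equivalence, though it explains why the condition ``$u^{-1}v \in \Pal^{-1}(1)$'' is a natural coset-type description and sets up the finer analysis of the fibers of~$\Pal$ that follows.
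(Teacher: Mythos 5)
Your proof is correct, and it takes a genuinely different (if closely related) route from the paper's. The paper argues in one line directly from the definition~\eqref{def-Pal}: $\Pal(u) = \Pal(v)$ iff $R_u(ab) = R_v(ab)$, iff $R_{u^{-1}v}(ab) = ab$ (using that $w \mapsto R_w$ is a homomorphism), iff $\Pal(u^{-1}v) = 1$, the last equivalence being Corollary~\ref{trivialPal}\,(a); in other words, it exploits the explicit \emph{trivial-cocycle} form $\Pal(w) = (ab)^{-1}R_w(ab)$. You instead take the functional equation~\eqref{Justin} as a black box and never use the explicit form of~$\Pal$: your cancellation argument shows, more generally, that \emph{any} map $\varphi \colon G \to E$ satisfying the cocycle condition $\varphi(uv) = \varphi(u)\,R_u(\varphi(v))$ has $\varphi(u) = \varphi(v)$ iff $\varphi(u^{-1}v) = 1$ --- triviality of the cocycle is irrelevant. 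What the paper's route buys is brevity; what yours buys is generality and a structural reason: the fibers of a cocycle are the left cosets of $\varphi^{-1}(1)$, which is also visible from the fact that $\widehat{\Pal} = (\Pal, \id)$ is a group homomorphism into $F_2 \rtimes F_2$. Of course the two proofs rest on the same underlying facts, since~\eqref{Justin} is itself derived from~\eqref{def-Pal} and the homomorphism property of $w \mapsto R_w$; yours simply packages them through Proposition~\ref{Pal-Justin}. Your closing remark that Corollary~\ref{trivialPal}\,(b) is not needed for the equivalence is also accurate.
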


\pf
By~\eqref{def-Pal}, $\Pal(u) = \Pal(v)$ if and only if $R_u(ab) = R_v(ab)$,
which is equivalent to $R_{u^{-1}v}(ab) = ab$, hence to
$\Pal(u^{-1}v) = 1$.
\epf

We are thus reduced to describing the subgroup~$\Pal^{-1}(1)$ of~$F_2$.
To this end
we consider the injective homomorphism $i: B_3 \to \Aut(F_2)$ of 
Proposition~\ref{B3}
and the homomorphism $\beta : F_2 \to B_3$ defined by
\begin{equation*}
\beta(a)  = \sigma_1 \quad\text{and}\quad
\beta(b)  = \sigma_2^{-1} \, .
\end{equation*}

It follows from the definitions that
$R_w = i\bigl( \beta(w) \bigr)$
for all $w\in F_2$.
Therefore, if $\beta(u) = \beta(v)$, then $R_u = R_v$, 
hence $\Pal(u) = \Pal(v)$ by~\eqref{def-Pal}.
In other words, $\Pal(w)$
depends only on the image of~$w$ in~$B_3$.

Let $N$ be the subgroup of $B_3$ generated 
by~$\sigma_1\sigma_2^{-1}\sigma_1^{-1}$. 
Since $B_3$ is torsion-free, $N$ is infinite cyclic.
We now give the promised description.

\begin{Prop}\label{Kernel}
We have $\Pal^{-1}(1) = \beta^{-1}(N)$.
\end{Prop}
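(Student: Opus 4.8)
The plan is to reduce the statement to a stabiliser computation inside the braid group. By Corollary~\ref{trivialPal}(a) together with the identity $R_w = i(\beta(w))$ noted above, an element $w$ lies in $\Pal^{-1}(1)$ if and only if the automorphism $i(\beta(w))$ fixes $ab$. Hence, writing $S = \{c \in B_3 \mid i(c)(ab) = ab\}$ for the stabiliser of $ab$ under the action $c \mapsto i(c)$, we obtain $\Pal^{-1}(1) = \beta^{-1}(S)$. Since $\beta$ is surjective (its image contains $\sigma_1 = \beta(a)$ and $\sigma_2^{-1} = \beta(b)$), proving the proposition amounts to showing that $S = N$. The inclusion $N \subseteq S$ is immediate: the generator of $N$ satisfies $i(\sigma_1\sigma_2^{-1}\sigma_1^{-1}) = R_a R_b R_a^{-1} = R_{aba^{-1}}$, and since $\Pal(aba^{-1}) = 1$ by the explicit values following~\eqref{def-Pal}, this automorphism fixes $ab$; as $S$ is a subgroup, $N \subseteq S$.

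For the reverse inclusion I would pass to abelianisations. The assignment $w \mapsto M_w$ depends only on $\beta(w)$ (because $M_w$ is the abelianisation of $R_w = i(\beta(w))$), and so factors through $\beta$ to give a homomorphism $\overline{M} : B_3 \to \SL_2(\ZZ)$ with $\overline{M}(\sigma_1) = M_a$ and $\overline{M}(\sigma_2) = M_b^{-1}$. This is the classical surjection of~\cite{KT}, whose kernel is the infinite cyclic group $\langle (\sigma_1\sigma_2\sigma_1)^4 \rangle$; this is consistent with $M_{(ab^{-1}a)^4} = I_2$ in~\eqref{M-relations}. Given $c \in S$, applying $\pi$ to $i(c)(ab) = ab$ and using Proposition~\ref{M} shows that $\overline{M}(c)$ fixes the column vector $(1,1)^t$. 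A direct check shows that the matrices of $\SL_2(\ZZ)$ fixing $(1,1)^t$ form the infinite cyclic unipotent group generated by $M_{aba^{-1}} = \left(\begin{smallmatrix} 2 & -1 \\ 1 & 0 \end{smallmatrix}\right) = \overline{M}(\sigma_1\sigma_2^{-1}\sigma_1^{-1})$. Therefore $\overline{M}(c) = \overline{M}\big( (\sigma_1\sigma_2^{-1}\sigma_1^{-1})^n \big)$ for some $n \in \ZZ$, and so the element $c' = (\sigma_1\sigma_2^{-1}\sigma_1^{-1})^{-n}\, c$ lies both in $S$ (both factors do) and in $\Ker \overline{M}$.

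It then remains to prove that $c' = 1$, after which $c = (\sigma_1\sigma_2^{-1}\sigma_1^{-1})^n \in N$ and we are done. Since $c' \in \Ker \overline{M} = \langle (\sigma_1\sigma_2\sigma_1)^4 \rangle$ and $i(\sigma_1\sigma_2\sigma_1) = R_a R_b^{-1} R_a = R_{ab^{-1}a}$, we may write $c' = (\sigma_1\sigma_2\sigma_1)^{4\ell}$, so that $i(c') = R_{(ab^{-1}a)^{4\ell}}$ is, by~\eqref{R-center30}, conjugation by $(bab^{-1}a^{-1})^\ell$. Such a conjugation fixes $ab$ only if $(bab^{-1}a^{-1})^\ell$ lies in the centraliser of $ab$, which is $\langle ab \rangle$ because $ab$ is not a proper power in~$F_2$; comparing images under $\pi$ (a commutator power maps to $0$, whereas nonzero powers of $ab$ do not) forces $\ell = 0$, hence $c' = 1$. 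This yields $S \subseteq N$ and completes the argument.

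I expect the last step to be the main obstacle: once the abelianisation $\overline{M}$ has been used to strip off the $N$-part, it is blind on its own kernel, and one must invoke the explicit conjugation formula~\eqref{R-center30} for $R_{(ab^{-1}a)^{4\ell}}$ together with the centraliser of $ab$ to eliminate the central contribution. This is exactly the point where the present computation parallels the disposal of the centre in the injectivity proof of Proposition~\ref{B3}, and the rest of the proof is bookkeeping around the homomorphisms $\beta$, $i$, and $\overline{M}$.
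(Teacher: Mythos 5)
Your proposal is correct, and its skeleton is the same as the paper's proof: reduce to the stabiliser of $ab$ via Corollary~\ref{trivialPal}, pass to the classical surjection $B_3 \to \SL_2(\ZZ)$ (kernel $\langle (\sigma_1\sigma_2\sigma_1)^4\rangle$), identify the stabiliser of $(1,1)^t$ in $\SL_2(\ZZ)$ as the cyclic group generated by $M_{aba^{-1}}$, and then dispose of the leftover central part using the conjugation formula~\eqref{R-center30}. The one genuine divergence is in that final step. The paper argues by hand with reduced words: for $p \neq 0$ the element $(bab^{-1}a^{-1})^{p}\, ab\, (bab^{-1}a^{-1})^{-p}$ is shown to be represented by a reduced word different from $ab$, which is entirely self-contained. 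You instead invoke the structural fact that the centraliser of $ab$ in $F_2$ is $\langle ab \rangle$ (because $ab$ is not a proper power), then abelianize to force the exponent of $ab$ to vanish. This is cleaner and equally valid, at the cost of citing the standard description of centralisers in free groups (e.g.\ Lyndon--Schupp). One small elision in your write-up: comparing images under $\pi$ gives $(bab^{-1}a^{-1})^{\ell} = (ab)^{0} = 1$, and to conclude $\ell = 0$ you still need that the nontrivial element $bab^{-1}a^{-1}$ has infinite order in $F_2$ (torsion-freeness of free groups); this is immediate, but it is a separate fact from the abelianization comparison and deserves a word.
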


\pf
By Corollary~\ref{trivialPal} it is enough to check that 
$$N_0 = \{\, \beta(w) \in B_3 \; |\, R_w(ab) = ab \, \} = \beta\bigl( \Pal^{-1}(1)\bigr)$$
coincides with~$N$.

First, observe that $R_{aba^{-1}} = R_a R_b R_{a^{-1}}$ sends 
$a$ to $aba$ and $b$ to~$a^{-1}$.
Therefore, $R_{aba^{-1}}(ab) = (aba)a^{-1} = ab$. 
Since $\beta(aba^{-1}) = \sigma_1\sigma_2^{-1}\sigma_1^{-1}$
generates~$N$, the latter is contained in~$N_0$.

Conversely, let $w \in F_2$ be such that $\Pal(w) = 1$.
Then by Corollary~\ref{trivialPal}, $R_w$ fixes $ab$; 
hence, $M_w$ fixes $(1,1) \in \ZZ^2$. 
An easy computation shows that any matrix in~$\SL_2(\ZZ)$
fixing~$(1,1)$ is of the form
$$C_r = 
\begin{pmatrix}
1+r & -r\\
r & 1-r
\end{pmatrix} 
\, ,$$
where $r\in \ZZ$.
Since $r\mapsto C_r$ is a group homomorphism, 
we have $C_r = (C_1)^r$. Now,
$$C_1
= \begin{pmatrix}
2 & -1\\
1 & 0
\end{pmatrix} 
= 
\begin{pmatrix}
1 & 1\\
0 & 1
\end{pmatrix} 
\begin{pmatrix}
1 & 0\\
1 & 1
\end{pmatrix} 
\begin{pmatrix}
1 & -1\\
0 & 1
\end{pmatrix} 
= M_{aba^{-1}} \, .
$$
Therefore, $M_w = M_{(aba^{-1})^r}$.
It is well known that $\sigma_1 \mapsto M_a$ and
$\sigma_2 \mapsto M_b^{-1}$ defines a surjective group homomorphism
$B_3 \to \SL_2(\ZZ)$
whose kernel is the infinite cyclic group generated by~$(\sigma_1\sigma_2\sigma_1)^4$.
Since $(\sigma_1\sigma_2\sigma_1)^4$ is central in~$B_3$, we conclude that
$$\beta(w) = (\sigma_1\sigma_2\sigma_1)^{4p} \, (\sigma_1\sigma_2^{-1}\sigma_1^{-1})^r $$
for some~$p\in \ZZ$.
Consequently,
$$R_w = i \bigl( \beta(w)\bigr) 
= (R_{(ab^{-1}a)^4})^{p} \, (R_{aba^{-1}})^r \, .$$
Since $R_{aba^{-1}}$ fixes~$ab$ and $R_w$ is assumed to fix~$ab$, we obtain
$$ab = R_w(ab) = (R_{(ab^{-1}a)^4})^{p}(ab)\, .$$
By~\eqref{R-center30}, $R_{(ab^{-1}a)^4}$ is the conjugation by~$b a b^{-1}a^{-1}$;
hence, $(R_{(ab^{-1}a)^4})^p$ is the conjugation by~$(b a b^{-1}a^{-1})^p$.
We thus obtain
$$ab = (b a b^{-1}a^{-1})^p \, ab \, (b a b^{-1}a^{-1})^{-p} \, .$$
If $p<0$, then the right-hand side is a reduced word different from~$ab$.
If $p>0$, then the right-hand side represents the same element of~$F_2$
as the reduced word $(b a b^{-1}a^{-1})^{p-1} \, ba \, (b a b^{-1}a^{-1})^{-p}$,
which is also different from~$ab$. It follows that necessarily $p=0$
and hence
$\beta(w) = (\sigma_1\sigma_2^{-1}\sigma_1^{-1})^r $. 
We have thus proved that $N_0 \subset N$.
\epf

Observe that $\Pal^{-1}(1) = \beta^{-1}(N)$ is the product (in~$F_2$)
of the cyclic group generated by~$aba^{-1}$ by 
the kernel $\Ker(\beta)$ of $\beta : F_2 \to B_3$. 
In view of the presentation~\eqref{presentationB3}, 
$\Ker(\beta)$ is the normal subgroup of~$F_2$
generated by~$ab^{-1}aba^{-1}b$ (or equivalently by $(aba^{-1})(bab^{-1})$).

By~\cite{Lu}, if $u$, $v \in \{a,b\}^*$, then $\Pal(u) = \Pal(v)$ if and only if~$u=v$.
This sharply contrasts with the previous results.

\section{The profinite topology}\label{profinite}

The \emph{profinite topology} on~$F_2$ is the coarsest topology such that every group homomorphism 
from~$F_2$ into a finite group is continuous. 
This topology was introduced by Marshall Hall in~\cite{Ha} and is sometimes called the Hall topology;
see~\cite{PR} and~\cite{Re} for applications to automata theory and semigroup theory.

\begin{Thm}\label{continu}
The map $\Pal: F_2 \to F_2$ is continuous for the profinite topology.
\end{Thm}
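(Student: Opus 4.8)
The plan is to exploit the formula $\Pal(w) = (ab)^{-1} R_w(ab)$ from~\eqref{def-Pal} and reduce the continuity of~$\Pal$ to the continuity of the single map $w \mapsto R_w(ab)$, since left multiplication by the fixed element $(ab)^{-1}$ is a homeomorphism of~$F_2$ for the profinite topology (the profinite topology is a group topology). Thus it suffices to show that $w \mapsto R_w(ab)$ is continuous. More generally, I would first establish the following principle: for any fixed $x \in F_2$, the evaluation map $\operatorname{ev}_x : \RR \to F_2$, $R_w \mapsto R_w(x)$, and hence the composite $w \mapsto R_w(x)$, is continuous provided the source is given a suitable topology coming from the profinite topology on~$F_2$ via $w \mapsto R_w$. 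Since $\Pal$ factors through~$B_3$ (as noted before Proposition~\ref{Kernel}, $\Pal(w)$ depends only on $\beta(w) \in B_3$), the cleanest route is to factor the whole picture through the profinite completions.

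First I would recall the abstract criterion for profinite continuity: a map $f : F_2 \to F_2$ is continuous for the profinite topology if and only if, for every finite-index normal subgroup $K \trianglelefteq F_2$, the composite $F_2 \xrightarrow{f} F_2 \to F_2/K$ factors through $F_2/H$ for some finite-index normal subgroup $H \trianglelefteq F_2$; equivalently, $f$ extends to a continuous map $\widehat{F_2} \to \widehat{F_2}$ on the profinite completion. So the concrete goal becomes: given a finite quotient $q : F_2 \twoheadrightarrow Q$, I must find a finite-index subgroup on which the value of $q(\Pal(w)) = q\bigl((ab)^{-1} R_w(ab)\bigr)$ depends only on a finite quotient of the variable~$w$. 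The key structural input is that each $R_w$ is an automorphism of~$F_2$ fixing the commutator (Lemma~\ref{commutator}), and that $w \mapsto R_w$ is itself a group homomorphism $F_2 \to \Aut(F_2)$. Automorphisms act on the characteristic finite quotients: any $\varphi \in \Aut(F_2)$ descends to an automorphism of $F_2/K$ whenever $K$ is \emph{characteristic} (or more generally invariant under $\RR$), and the finitely many verbal subgroups give a cofinal system of characteristic finite-index subgroups. The plan is to use this to control $R_w(ab)$ modulo~$K$.

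The central computation I would carry out is this: fix a characteristic finite-index subgroup $K \trianglelefteq F_2$, so that $\RR$ acts on the finite group $G = F_2/K$ through a homomorphism $\RR \to \Aut(G)$, i.e.\ a map $\rho : F_2 \to \Aut(G)$, $w \mapsto \overline{R_w}$. Because $\Aut(G)$ is finite, $\rho$ is automatically continuous for the profinite topology on~$F_2$, so there is a finite-index normal subgroup $H \trianglelefteq F_2$ with $\rho(H) = 1$; concretely $H = \ker \rho$ has finite index. Then for $w$ in a fixed coset of~$H$, the automorphism $\overline{R_w}$ of $G$ is constant, so $q(R_w(ab)) = \overline{R_w}\bigl(q(ab)\bigr)$ is constant on that coset, where $q : F_2 \to G$ is the projection. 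Hence $q \circ \Pal$ is constant on each coset of~$H$, i.e.\ it factors through the finite group $F_2/H$, which is exactly the required factorization. Running this over the cofinal family of characteristic finite-index $K$ yields continuity of $q \circ \Pal$ for \emph{every} finite quotient $q$, and therefore of $\Pal$ itself.

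The main obstacle, and the point demanding genuine care, is the passage from an arbitrary finite quotient $q : F_2 \to Q$ (with kernel $K$ not necessarily characteristic) to one where $\RR$ really does act. The clean fix is to replace $K$ by the intersection $K_0 = \bigcap_{\varphi \in \RR} \varphi(K)$ of its $\RR$-orbit; since $F_2$ has only finitely many subgroups of any given index, this is a finite intersection of finite-index subgroups, hence itself of finite index, and it is $\RR$-invariant by construction. The quotient $F_2/K_0$ is then a finite group on which $\RR$ acts, the projection $F_2 \to Q$ factors through $F_2 / K_0$, and the argument of the previous paragraph applies verbatim with $G = F_2/K_0$. I would also remark that the factorization-through-$B_3$ observation preceding Proposition~\ref{Kernel} is not strictly needed for the proof but clarifies that the constructed continuous extension $\widehat{F_2} \to \widehat{F_2}$ is canonical, which sets up the subsequent uniqueness statement that $\Pal$ is the \emph{unique} continuous extension of de~Luca's map~$\PP$.
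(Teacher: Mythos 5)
Your proof is correct, but its key mechanism differs from the paper's. Both arguments reduce, via $\Pal(w)=(ab)^{-1}R_w(ab)$, to showing that $w\mapsto \varphi(R_w(a))$ and $w\mapsto\varphi(R_w(b))$ are locally constant for every homomorphism $\varphi\colon F_2\to G$ to a finite group, and both exploit the same two facts: $w\mapsto R_w$ is a group homomorphism, and the relevant data lives in a finite set. But the paper never needs $\Ker\varphi$ to be invariant under~$\RR$: it lets $F_2$ act on the right on the finite set $\Hom(F_2,G)\cong G\times G$ by precomposition, $\varphi\cdot w=\varphi\circ R_w$ (which makes sense for every $\varphi$, invariant kernel or not), and observes that each fiber of $w\mapsto\bigl((\varphi\circ R_w)(a),(\varphi\circ R_w)(b)\bigr)$ is a coset of the stabilizer of a point, hence open, because stabilizers of actions on finite sets have finite index. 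You instead force invariance by replacing $K=\Ker\varphi$ with $K_0=\bigcap_{\psi\in\RR}\psi(K)$, which requires the additional (true and standard) fact that a finitely generated group has only finitely many subgroups of each given index, and you then take the kernel of the induced homomorphism $F_2\to\Aut(F_2/K_0)$. The paper's precomposition trick is more economical; your version has the merit of exhibiting explicitly that $\varphi\circ\Pal$ factors through a finite quotient of the source (a uniform statement), which is what produces an induced map on profinite completions and sets up Corollary~\ref{unique} nicely --- though the paper's proof yields the same uniformity by intersecting the finitely many stabilizers.

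Two side remarks in your write-up should be corrected, though neither affects the argument. First, your ``abstract criterion'' is not an equivalence: only the direction you actually use (factorization through a finite quotient, for every finite quotient of the target, implies continuity) is valid. The converse is false: continuity for the profinite topology does not imply this uniform factorization property, and hence does not imply continuous extendability to~$\widehat{F_2}$. Already on~$\ZZ$ one can partition the group into two open sets that are not unions of congruence classes modulo any single~$N$ (alternate the successive differences $D_k=(a_k+2^k\ZZ)\setminus(a_{k+1}+2^{k+1}\ZZ)$ of a nested sequence of arithmetic progressions whose intersection in~$\ZZ$ is empty). Second, the claim that ``the finitely many verbal subgroups give a cofinal system of characteristic finite-index subgroups'' is garbled; the correct statement, which your $\RR$-orbit construction replaces anyway, is that the intersection of all subgroups of index at most~$n$ is a characteristic subgroup of finite index, and such subgroups are cofinal among finite-index subgroups.
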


Thus the map $\Pal$ yields an example of a non-trivial combinatorially-defined
continuous function on~$F_2$. 
For other examples of such fonctions, see the subword functions in~\cite{MR}.

\pf
Since $\Pal(w)  = b^{-1} a ^{-1}\, R_w(a) \, R_w(b)$, it suffices to prove that the map
$F_2 \to F_2 \times F_2\, ; \; w \mapsto (R_w(a), R_w(b))$ is continuous.
The latter is equivalent to the continuity of 
$$w \mapsto \bigl((\varphi \circ R_w)(a), (\varphi \circ R_w)(b) \bigr)$$
for all group homomorphisms $\varphi : F_2 \to G$ into a finite group~$G$
(equipped with the discrete topology).
It thus suffices to check that for each $(g,h) \in G\times G$, 
the set $X(g,h)$ of~$w\in F_2$ such that 
$$((\varphi \circ R_w)(a), (\varphi \circ R_w)(b) ) = (g,h)$$
is a union of cosets of subgroups of finite index of~$F_2$.

Since $F_2$ acts on the left on itself by $w \mapsto R_w$, it acts
on the right on the set $\Hom(F_2,G)$ of group homomorphisms of~$F_2$ into~$G$
by 
$\varphi \cdot w = \varphi \circ R_w$
for all $\varphi \in \Hom(F_2,G)$ and $w\in F_2$.
Now, $\Hom(F_2,G)$ is in bijection with $G \times G$ via the map
$\varphi \mapsto (\varphi(a), \varphi(b))$.
Hence, $F_2$ acts on the right on $G \times G$ in such a way that
$$\bigl(\varphi(a), \varphi(b) \bigr) \cdot w = \bigl((\varphi \circ R_w)(a), (\varphi \circ R_w)(b) \bigr)$$
for all $\varphi \in \Hom(F_2,G)$ and $w\in F_2$.
Therefore, the above-defined set~$X(g,h)$ coincides with the set
of $w\in F_2$ such that $(\varphi(a), \varphi(b)) \cdot w = (g,h)$.
This set is a coset in~$F_2$
of the stabilizer $G(g,h) = \{w \in F_2 \, |\, (g,h) \cdot w = (g,h)\}$.
We conclude by observing that each stabilizer of an action of a group~$F$ 
on a \emph{finite} set is necessarily a finite index subgroup of~$F$.
\epf

\begin{Cor}\label{unique}
The map $\Pal: F_2 \to F_2$ is the unique continuous extension to~$F_2$ 
of de Luca's right iterated palindromic closure~$\PP : \{a,b\}^* \to \{a,b\}^*$.
\end{Cor}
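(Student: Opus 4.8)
The plan is to deduce the corollary from Theorem~\ref{continu} together with the standard topological principle that two continuous maps into a Hausdorff space which agree on a dense subset must coincide. Existence of a continuous extension is already in hand: by Theorem~\ref{continu} the map~$\Pal$ is continuous for the profinite topology, and by Corollary~\ref{Cor} its restriction to~$\{a,b\}^*$ equals~$\PP$. So the entire content of the statement is uniqueness, and I would reduce it to two facts, namely that the submonoid~$\{a,b\}^*$ is dense in~$F_2$ for the profinite topology, and that this topology is Hausdorff.

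For the density I would argue one finite quotient at a time. A basis of open sets for the profinite topology is given by the cosets~$wH$, where $w$ ranges over~$F_2$ and $H$ over the finite-index normal subgroups of~$F_2$; thus $\{a,b\}^*$ is dense precisely when it meets every such coset. Fixing $H$ and writing $G = F_2/H$ with quotient map~$p$, the image $p\bigl(\{a,b\}^*\bigr)$ is the submonoid of the finite group~$G$ generated by $p(a)$ and~$p(b)$. The key observation is that in a finite group any submonoid is already a subgroup, since each element~$g$ satisfies $g^{-1} = g^{\,n-1}$, where $n$ is the order of~$g$. As $a$ and~$b$ generate~$F_2$, the elements $p(a)$ and~$p(b)$ generate~$G$, so this submonoid is all of~$G$. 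Hence $\{a,b\}^*$ meets every coset~$wH$, which gives the desired density.

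For the Hausdorff property I would invoke the residual finiteness of~$F_2$: given two distinct elements~$x, y$, some homomorphism $\varphi : F_2 \to G$ to a finite group separates them, and the preimages $\varphi^{-1}(\varphi(x))$ and $\varphi^{-1}(\varphi(y))$ are then disjoint open sets isolating~$x$ from~$y$. With both ingredients in place the conclusion is immediate: if $f : F_2 \to F_2$ is any continuous map extending~$\PP$, then $f$ and~$\Pal$ are continuous maps into a Hausdorff space agreeing on the dense subset~$\{a,b\}^*$, whence $f = \Pal$. The only nontrivial step is the density of~$\{a,b\}^*$; although the finite-group argument above is short, it is the crux of the matter, and everything else is either already established or a standard point-set-topology fact.
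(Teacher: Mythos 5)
Your proposal is correct and follows essentially the same route as the paper: existence comes from Theorem~\ref{continu} together with Corollary~\ref{Cor}, and uniqueness from the density of $\{a,b\}^*$ in~$F_2$ for the profinite topology combined with the Hausdorff property. The only differences are in packaging: the paper proves density by observing that $(w^{n!-1})_n$ converges to~$w^{-1}$ for $w = a, b$ (the same finite-group fact you use, namely that inverses are positive powers, expressed via convergent sequences instead of cosets of finite-index subgroups), and it leaves the Hausdorff/residual-finiteness point implicit where you spell it out.
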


\pf
It is well known that the submonoid $\{a,b\}^*$ is dense in~$F_2$ for the
profinite topology. Indeed, since  $(w^{n!})_n$ converges to~$1$,
the sequence $(w^{n!-1})_n$ converges to~$w^{-1}$. 
Applying this remark to $w = a,b$,
we see that $a^{-1}$ and $b^{-1}$ are limits of sequences of elements of~$\{a,b\}^*$.
Therefore the map $\PP : \{a,b\}^* \to \{a,b\}^*$ has at most one continuous
extension to~$F_2$.
We conclude with Corollary~\ref{Cor} and Theorem~\ref{continu}.
\epf

\begin{Rem}
Though $\Pal$ is continuous for the profinite topology, it is
\emph{not} continuous for the pro-$p$-finite topology on~$F_2$,
where $p$ is a prime number.
Recall that the \emph{pro-$p$-finite topology} is the coarsest topology 
such that every group homomorphism 
from~$F_2$ into a finite $p$-group is continuous. 
By definition of the pro-$p$-finite topology, the sequence $(w^{p^n})_n$ converges to~$1$
for any $w\in F_2$.
We claim that if $w =  a^{-1} b$,
then the sequence $\Pal(w^{p^n})$ does not converge to~$\Pal(1)= 1$.
It is enough to check that that the vector-valued sequence $\pi( \Pal(w^{p^n}) )\in \ZZ^2$ 
does not converge to~$\pi(1)$, which is the zero vector.

The matrix $M_w\in \SL_2(\ZZ)$ corresponding to~$w=  a^{-1} b$ is equal to
$$M_w = M_a^{-1} M_b =
\begin{pmatrix}
0 & -1\\
1 & 1
\end{pmatrix}\, ,$$
which is of order six. 
So $M_w^k$ takes only six values when $k$ runs over the positive integers. 
One checks that if $k$ is not divisible by~$6$, then
$$\bigl(M_w^k - I_2 \bigr)
\begin{pmatrix}
1 \\
1
\end{pmatrix} 
\neq \begin{pmatrix}
0 \\
0
\end{pmatrix} 
\, . $$
Since no power $p^n$ of a prime number is divisible by~$6$, by the previous
observation and by Proposition~\ref{M},
$$\pi\bigl( \Pal(w^{p^n}) \bigr) 
= \bigl(M_w^{p^n} - I_2 \bigr) 
\begin{pmatrix}
1 \\
1
\end{pmatrix} 
\neq
\begin{pmatrix}
0 \\
0
\end{pmatrix} 
\, . $$
This shows that the sequence $\pi( \Pal(w^{p^n}))$ does not converge to~$\pi(1)$,
and hence $\Pal$ is not continuous for the pro-$p$-finite topology.
\end{Rem}

\begin{Rem}
As a consequence of Theorem~\ref{continu}, 
de Luca's right iterated palindromic closure $\PP :\{a,b\}^* \to \{a,b\}^*$ 
is continuous for the restriction of the profinite topology to~$\{a,b\}^*$. 
Nevertheless, $w\mapsto w^+$ is not continuous.
Indeed, we have seen in the proof of Corollary~\ref{unique}
that for any $w\in F_2$,
the sequence $(w^{n!})_n$ converges to~$1$ in the profinite topology.
In particular, the sequence $(ab^{n!})_n$ converges to~$a$. 
Now, $(ab^{n!})^+ = ab^{n!}a$. Therefore,
the sequence $(ab^{n!})^+$ converges to~$aa$, 
which is different from~$a$.
\end{Rem}

\section{A characterization of the image of~$\Pal$}\label{sect-image}

The aim of this section is to characterize the elements of~$F_2$
belonging to the image $\Pal(F_2)$ of~$\Pal$.
The notation $g \sim h$ used in the sequel means that $g$ and $h$ are conjugate elements 
of~$F_2$.

By \eqref{def-Pal} and~\eqref{def2-Pal}, for any $w\in F_2$,
\begin{eqnarray*}
ab \, \Pal(w) & = & R_w(ab) 
= R_w(a) \, R_w(ba) \, R_w(a)^{-1} \\
& = & R_w(a) \bigl(ba\, \Pal(w) \bigr) R_w(a)^{-1} \, .
\end{eqnarray*}
Thus, $abg \sim bag$ for all $g\in \Pal(F_2)$.
In~\cite{Pi}, Pirillo proved that, if $A$ is an alphabet and $g$ is a word in~$A$
such that $abg \sim bag$ for two distinct letters $a,b \in A$, 
then $g$ is a central word in the alphabet $\{a,b\}$;
hence by de~Luca~\cite{Lu}, $g = \Pal(w)$ for some~$w\in  \{a,b\}^*$.
We have the following extension of Pirillo's result to our palindromization map.

\begin{Thm}\label{image}
An element $g\in F_2$ belongs to $\Pal(F_2)$ if and only if
$abg$ and $ba g$ are conjugate in~$F_2$.
\end{Thm}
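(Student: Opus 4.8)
The forward direction is already established in the text preceding the statement: for any $w \in F_2$ the identity $ab\,\Pal(w) = R_w(a)\bigl(ba\,\Pal(w)\bigr)R_w(a)^{-1}$ shows that $abg \sim bag$ whenever $g \in \Pal(F_2)$. So the plan is to concentrate entirely on the converse: given $g \in F_2$ with $abg \sim bag$, produce a $w \in F_2$ with $\Pal(g\cdot\text{something}) = g$, or more directly exhibit $g$ as a value of $\Pal$. The strategy I would follow is to recast the conjugacy hypothesis as the existence of a solution to a fixed-point equation for one of the automorphisms $R_w$, and then invoke Proposition~\ref{char-commutator}, which identifies $\RR$ as \emph{exactly} the stabilizer of the commutator $aba^{-1}b^{-1}$.

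The key observation to aim for is this. Suppose $abg \sim bag$, say $abg = h\,(bag)\,h^{-1}$ for some $h \in F_2$. Since $\Pal(w) = (ab)^{-1}R_w(ab)$ and also $\Pal(w) = (ba)^{-1}R_w(ba)$ by~\eqref{def-Pal} and~\eqref{def2-Pal}, I want to build an automorphism $\varphi$ of $F_2$ with $\varphi(ab) = abg$ and $\varphi(ba) = bag$; any such $\varphi$ would then satisfy $\varphi(ab)\varphi(a)^{-1}\varphi(b)^{-1} = abg(bag)^{-1}$ and, more to the point, would fix $ab(ba)^{-1} = ab a^{-1}b^{-1}$ provided we arrange $\varphi(ab)(\varphi(ba))^{-1} = abg(bag)^{-1} = aba^{-1}b^{-1}$ — which is precisely the reformulation of $abg \sim bag$ once the conjugating element $h$ is absorbed correctly. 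Concretely, I would define $\varphi$ on generators by $\varphi(a) = h\,a$ and an appropriate companion value for $\varphi(b)$ dictated by the two target equations $\varphi(ab)=abg$, $\varphi(ba)=bag$, check that these two equations are consistent precisely because $abg$ and $bag$ differ by the conjugation $h(\cdot)h^{-1}$, and verify that $\varphi$ is genuinely an automorphism (e.g.\ by exhibiting its inverse, or by checking it induces an element of $\GL_2(\ZZ) = \SL_2(\ZZ)$ on abelianization together with Nielsen's criterion). Once $\varphi$ fixes $aba^{-1}b^{-1}$, Proposition~\ref{char-commutator} forces $\varphi = R_w$ for some $w \in F_2$, and then $\Pal(w) = (ab)^{-1}R_w(ab) = (ab)^{-1}(abg) = g$, completing the argument.

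The main obstacle I anticipate is the construction and verification of the automorphism $\varphi$. The conjugacy hypothesis $abg \sim bag$ only supplies \emph{some} conjugator $h$, and there is genuine work in pinning down $\varphi(b)$ so that both $\varphi(ab) = abg$ and $\varphi(ba) = bag$ hold simultaneously with the \emph{same} $h$; a priori the two equations might demand incompatible values. I expect the resolution to hinge on the algebraic fact that $ab$ and $ba$ generate $F_2$ (indeed $\{ab, a\}$ or $\{ab, ba\}$ is a free basis up to the obvious relation), so that specifying $\varphi$ on $ab$ and on $a$ determines it, and the conjugacy relation translates into exactly the compatibility needed for $\varphi$ to be well defined and bijective. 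Verifying that $\varphi$ is surjective (hence an automorphism, since $F_2$ is Hopfian, or directly via abelianization landing in $\SL_2(\ZZ)$) is the delicate endpoint; everything after the appeal to Proposition~\ref{char-commutator} is then immediate.
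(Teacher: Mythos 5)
Your overall strategy --- convert the conjugacy hypothesis into the existence of an automorphism $\varphi$ with $\varphi(ab)=abg$ and $\varphi(ba)=bag$, observe that any such $\varphi$ fixes $(ab)(ba)^{-1}=aba^{-1}b^{-1}$, invoke Proposition~\ref{char-commutator} to get $\varphi=R_w$, and conclude $\Pal(w)=(ab)^{-1}R_w(ab)=g$ --- is sound, and it is genuinely different from the paper's proof. But as written there is a real gap precisely at the step you flag: nothing in the proposal proves that $\varphi$ is an automorphism, and that is the entire mathematical content of the converse. Two smaller points first. The identity $abg(bag)^{-1}=aba^{-1}b^{-1}$ holds for \emph{every} $g$ by free cancellation; the conjugacy hypothesis is needed only to guarantee that elements $A=\varphi(a)$, $B=\varphi(b)$ with $AB=abg$ and $BA=bag$ exist at all (since $AB\sim BA$ always). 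Also, your concrete choice $\varphi(a)=h\,a$ is wrong: if $abg=h(bag)h^{-1}$, then $\varphi(a)=ha$ forces $\varphi(b)=a^{-1}h^{-1}abg$, whence $\varphi(ba)=a^{-1}(bag)a\neq bag$ in general; the correct choice is $\varphi(a)=h$, $\varphi(b)=h^{-1}abg$, for which $\varphi(ab)=abg$ and $\varphi(ba)=h^{-1}(abg)h=bag$. The serious problem is the verification itself: checking that the abelianization lies in $\SL_2(\ZZ)$ is \emph{not} sufficient for an endomorphism of $F_2$ to be an automorphism (the endomorphism $a\mapsto a$, $b\mapsto b\,aba^{-1}b^{-1}$ has abelianization $I_2$ but is not an automorphism: an automorphism with trivial abelianization must be a conjugation, and no conjugation fixing $a$ sends $b$ to $baba^{-1}b^{-1}$); Hopfian-ness merely restates that surjectivity is what must be proved; and no candidate inverse is exhibited.

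The gap can be closed, but only by the one tool you name without actually deploying: Nielsen's classical theorem that a pair $A,B\in F_2$ is a basis if and only if $ABA^{-1}B^{-1}$ is conjugate to $(aba^{-1}b^{-1})^{\pm 1}$. With $A=h$, $B=h^{-1}abg$ one has $ABA^{-1}B^{-1}=(AB)(BA)^{-1}=(abg)(bag)^{-1}=aba^{-1}b^{-1}$ exactly --- a computation you essentially already made --- so $\{A,B\}$ is a basis, $\varphi$ is an automorphism fixing $aba^{-1}b^{-1}$, and the rest of your argument runs: Proposition~\ref{char-commutator} gives $\varphi=R_w$, and $\Pal(w)=(ab)^{-1}R_w(ab)=g$ by~\eqref{def-Pal}. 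Repaired this way, your proof is shorter than the paper's and rests on different foundations. The paper instead argues by induction on the length of the reduced word $g$: when $abg$ has no cyclic cancellation, conjugacy takes place in the free monoid and Pirillo's combinatorial theorem applies; when $g$ begins with $b^{-1}$ (or $a^{-1}$), the paper writes $g=b^{-1}hb^{-1}$, applies $\tau^{-1}$ to get a shorter word satisfying the same conjugacy condition, and reassembles $g=\Pal(ab^{-1}au)$ via the cocycle identity~\eqref{Justin}. The paper's route is essentially constructive (it recursively produces a preimage $w$) and self-contained modulo Pirillo's result; yours buys brevity at the price of the deeper theorem of Nielsen, which plays the same role here that the Cohn--Helling result plays in the proof of Proposition~\ref{char-commutator}.
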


\pf
We assume that $g$ is a reduced word in~$F_2$ such that $abg \sim bag$.
We shall prove by induction on the length of~$g$ that $g = \Pal(w)$
for some $w\in F_2$.

If $g$ is of length zero, then $g = 1 = \Pal(1)$. 
Now, suppose that $g$ is of length~$>0$.
If there is no cyclic cancellation in~$abg$, then there is none in~$bag$ since $abg \sim bag$;
in this case,  $abg$ and $bag$ are conjugate in the free monoid $\{a,b, a^{-1}, b^{-1}\}^*$. 
Applying the above-mentioned theorem by Pirillo,
we conclude that $g = \Pal(w)$ for some~$w\in  \{a,b\}^*$.

If the reduced word $g$ starts by~$b^{-1}$, write $g = b^{-1}g'$, where
$g'$ is a reduced word. 
We claim that $g'$ ends with~$b^{-1}$.
Indeed, 
$$ag' = abg \sim bag = bab^{-1}g'\, .$$
If $g'$ did not end with~$b^{-1}$, then $bab^{-1}g'$ would be cyclically reduced
and conjugate to~$ag'$, which is shorter than~$bab^{-1}g'$;
this is impossible and thus establishes the claim.
It follows that $g = b^{-1} h b^{-1}$ for some shorter reduced word~$h$.
Let us show that
\begin{equation}\label{sim}
ab^{-1} h \sim b^{-1} ah \, .
\end{equation}
Indeed, $ab^{-1} h =  agb \sim bag \sim  abg 
= ahb^{-1} \sim b^{-1}ah$.
To~\eqref{sim} we apply
the inverse~$\tau^{-1}$ of the automorphism~$\tau$ of~$F_2$ 
introduced in Section~\ref{automorphisms}, thus obtaining
$$ba \, \tau^{-1}(h)  = \tau^{-1}(ab^{-1} h) 
\sim \tau^{-1}(b^{-1} ah) =  ab \, \tau^{-1}(h)\, .$$
Since $\tau^{-1}(h)$ is a reduced word of length less than the length of~$g$,
we may apply the induction hypothesis to~$\tau^{-1}(h)$ and deduce that
$\tau^{-1}(h) = \Pal(u)$ for some~$u\in F_2$.
Let us compute $\Pal(ab^{-1}au)$. 
Using~\eqref{Justin}, \eqref{values3}, and~\eqref{R-center10}
successively, we obtain
\begin{eqnarray*}
\Pal(ab^{-1}au) & = & \Pal(ab^{-1}a)\, R_{ab^{-1}a}\bigl( \Pal(u) \bigr) \\
& = & b^{-2} \, R_{ab^{-1}a}\bigl( \tau^{-1}(h) \bigr) \\
& = & b^{-2} \, (bhb^{-1}) = g \, .
\end{eqnarray*}
The case where $g$ starts by~$a^{-1}$ is treated in a similar manner.
\epf

\begin{Cor}\label{closedimage}
The subset $\Pal(F_2)$ is closed in~$F_2$ for the profinite topology.
\end{Cor}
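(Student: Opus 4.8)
The plan is to express $\Pal(F_2)$ as a preimage, under a continuous map, of a closed set, and then invoke the characterization from Theorem~\ref{image}. By that theorem, $g \in \Pal(F_2)$ if and only if $abg$ and $bag$ are conjugate in $F_2$. So it suffices to show that the set
\begin{equation}\label{closed-conj}
C = \{\, g \in F_2 \;|\; abg \sim bag \,\}
\end{equation}
is closed for the profinite topology, since $\Pal(F_2) = C$.

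First I would reduce the statement to the fact that conjugacy classes (or, more precisely, the conjugacy relation) behave well with respect to the profinite topology. The key classical input is that $F_2$ is \emph{conjugacy separable}: two elements of $F_2$ are conjugate if and only if their images are conjugate in every finite quotient. This was proved by Stebe, and follows more generally from the conjugacy separability of free groups. Concretely, for a fixed pair $(g_0, h_0)$ of non-conjugate elements, there is a homomorphism $\varphi : F_2 \to G$ onto a finite group such that $\varphi(g_0)$ and $\varphi(h_0)$ are not conjugate in~$G$.

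With this in hand, I would argue that the complement of $C$ is open. Fix $g_0 \notin C$, so $abg_0 \not\sim bag_0$. By conjugacy separability choose $\varphi : F_2 \to G$ onto a finite group with $\varphi(abg_0)$ and $\varphi(bag_0)$ in distinct conjugacy classes of~$G$. The two maps $g \mapsto \varphi(abg) = \varphi(ab)\varphi(g)$ and $g \mapsto \varphi(bag) = \varphi(ba)\varphi(g)$ are continuous (indeed $\varphi$ is continuous by definition of the profinite topology, and left translation in the discrete finite group $G$ is continuous). The preimage
$$
U = \{\, g \in F_2 \;|\; \varphi(abg) \not\sim \varphi(bag) \text{ in } G \,\}
$$
is therefore open, it contains $g_0$, and every $g \in U$ satisfies $abg \not\sim bag$ in $F_2$ (since conjugate elements have conjugate images), so $U \subseteq F_2 \setminus C$. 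As $g_0$ was arbitrary, $F_2 \setminus C$ is open and $C = \Pal(F_2)$ is closed.

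The main obstacle is pinning down the precise separability statement needed and citing it correctly: one must use that $F_2$ is conjugacy separable, not merely residually finite, because closedness of $C$ is exactly a statement that non-conjugacy is detected in a finite quotient. Once that classical fact is invoked, the topological argument above is routine: the complement is covered by finitely-determined (clopen) neighborhoods on which the finite-quotient conjugacy test fails. An alternative, more self-contained route would be to combine Theorem~\ref{image} with Corollary~\ref{unique} and Theorem~\ref{continu}, but that only shows $\Pal(F_2)$ is the continuous image of $F_2$, which need not be closed; so the conjugacy-separability approach via \eqref{closed-conj} is the cleaner path.
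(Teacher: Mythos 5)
Your proof is correct, and it rests on the same two pillars as the paper's: the reduction, via Theorem~\ref{image}, to showing that $C = \{\, g \in F_2 \;|\; abg \sim bag \,\}$ is closed, and the conjugacy separability of free groups (you cite Stebe; the paper invokes the same classical fact as \cite[Prop.~I.4.8]{LS}). Where you genuinely differ is in how separability is converted into closedness of~$C$. The paper argues sequentially: given $g_n \to g$ with $abg_n \sim bag_n$, it picks conjugators $x_n$, uses compactness of the profinite completion to extract a limit conjugator, concludes that $abg$ and $bag$ are conjugate in the completion and hence in every finite quotient, and only then applies separability. You run separability in contrapositive form directly: for $g_0 \notin C$ you choose a finite quotient $\varphi : F_2 \to G$ with $\varphi(abg_0) \not\sim \varphi(bag_0)$, and note that the condition $\varphi(abg) \not\sim \varphi(bag)$ depends only on $\varphi(g)$, hence cuts out a union of cosets of the finite-index subgroup $\Ker(\varphi)$ --- an open neighborhood of $g_0$ missing $C$. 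Your version is slightly cleaner: it needs neither the compactness of the completion nor the fact, implicit and unremarked in the paper, that sequential closedness suffices (true here because $F_2$ has only countably many finite-index subgroups, so the profinite topology is metrizable, but still an extra point to discharge). The paper's compactness argument yields marginally more --- conjugacy of the limits in the completion itself --- but that extra strength is not needed for the corollary. Your closing remark is also the right diagnosis: continuity of $\Pal$ (Theorem~\ref{continu}) alone cannot give closedness of the image, since $F_2$ is not compact in the profinite topology, so conjugacy separability is the unavoidable input.
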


\pf
By Theorem~\ref{image}, it is enough to check 
that the subset of elements $g\in F_2$
such that $abg \sim bag$ is closed in~$F_2$. 
It suffices to consider two sequences $(u_n)$ and
$(v_n)$ converging in~$F_2$ respectively to~$u$ and~$v$ and such that $u_n \sim v_n$
for all~$n$
and to show that $u$ and $v$ are conjugate in~$F_2$.
In this situation there are elements $x_n\in F_2$ such that $u_n x_n = x_n v_n$.
Since the profinite completion of~$F_2$ is compact, there is a subsequence of~$(x_n)$
that converges to an element~$x$ of the completion such that $ux = xv$.
Therefore, $u$ and $v$ are conjugate in the completion, hence in all finite quotients of~$F_2$.
By~\cite[Prop.~I.4.8]{LS}, $u$ and $v$ are conjugate in~$F_2$.
\epf

\begin{Qu}
De Luca~\cite{Lu} gives a simple algorithm to recover~$w \in \{a,b\}^*$ 
from $g = \PP(w)$, namely $w$ is the sequence of the letters of~$g$ that immediately follow
all palindromic prefixes (including the empty word) of~$g$.
It is possible to extract from the proof of Theorem~\ref{image} an algorithm 
producing $w \in F_2$ out of $g = \Pal(w)$; 
the element~$w$ obtained in this way is not necessarily of shortest length.
We raise the question of finding a simple constructive procedure that produces 
a (unique?)\ element of shortest length out of its image under~$\Pal$.
\end{Qu}

\appendix

\section{Serre's non-abelian cohomology}\label{Serre}

Let $G$ be a group acting on another group~$E$
via a group homomorphism $G \to \Aut(E)\, ; \ u \mapsto R_u$
(the group~$E$ is not assumed to be abelian).
Following~\cite[Chap.~1, Sect.~5.1]{Se}, 
we call \emph{cocycle of $G$ with values in~$E$}
any map $\varphi : G \to E$ verifying the \emph{cocycle condition}
\begin{equation}\label{cocycle}
\varphi(uv) = \varphi(u) \, R_u (\varphi(v))
\end{equation}
for all $u,v \in G$.

Two cocycles $\varphi, \varphi': G \to E$ are said to be \emph{cohomologous} 
if there exists $X \in E$ such that
\begin{equation}\label{coboundary}
\varphi'(u) = X^{-1} \, \varphi (u) \, R_u(X)
\end{equation}
for all $u \in G$.
A cocycle~$\varphi$ is said to be \emph{trivial} if it is cohomologous to
the cocycle sending each $u\in G$ to the neutral element~$1$ of~$E$; 
it follows from~\eqref{coboundary} that $\varphi$ is trivial if and only if 
there is $X \in E$ such that for all $u\in G$,
\begin{equation}\label{null}
\varphi(u) = X^{-1} R_u(X) \, .
\end{equation} 

Let $E \rtimes G$ be the semi-direct product associated to
the action of~$G$ on~$E$; 
it is the set $E \times G$ equipped with the product
\begin{equation*}
(X,u)\, (Y,v) = (X \, R_u(Y), uv)
\end{equation*}
for all $X,Y \in E$ and $u, v \in G$.
To a map $\varphi : G \to E$ we associate the map
$\widehat{\varphi} = (\varphi, \id): G \to E \rtimes G$.
Then $\varphi$ satisfies the cocycle equation~\eqref{cocycle} if and only
if $\widehat{\varphi}$ is a group homomorphism, i.e.,
\begin{equation}\label{homomorphism}
\widehat{\varphi}(uv) = \widehat{\varphi}(u) \, \widehat{\varphi}(v) \quad (u,v\in G)  \, .
\end{equation}

It is also easy to check that, if $\varphi, \varphi': G\to E$ and $X\in E$ satisfy~\eqref{coboundary}, 
then $\widehat{\varphi}$ and $\widehat{\varphi'}$
are conjugate in the group~$E \rtimes G$; more precisely,
\begin{equation*}
\widehat{\varphi'}(u) = (X,1)^{-1} \, \widehat{\varphi}(u) \, (X,1) \in E \rtimes G
\end{equation*}
for all $u\in G$. In particular, if~$\varphi$ is a trivial cocycle with $X\in E$ as in~\eqref{null},
then for all $u\in G$,
\begin{equation}\label{conjugate-trivial}
\widehat{\varphi}(u) = (X,1)^{-1} \, (1,u) \, (X,1) \, .
\end{equation}

\end{document}